\documentclass[12pt]{amsart}
\usepackage{amscd,amsmath,amsthm,amssymb,amsfonts,enumerate,hyperref}

\def\NZQ{\mathbb}               
\def\NN{{\NZQ N}}

%

%

%

%
\def\opn#1#2{\def#1{\operatorname{#2}}} 
	%
	\opn\chara{char} \opn\length{\ell} \opn\pd{pd} \opn\rk{rk}
	\opn\projdim{proj\,dim} \opn\injdim{inj\,dim} \opn\rank{rank}
	\opn\depth{depth} \opn\grade{grade} \opn\height{height}
	\opn\embdim{emb\,dim} \opn\codim{codim}
	\opn\Cl{Cl}
	
	\opn\Tr{Tr} \opn\bigrank{big\,rank}
	\opn\superheight{superheight}\opn\lcm{lcm}
	\opn\trdeg{tr\,deg}
	\opn\rdeg{rdeg}
	\opn\reg{reg} \opn\lreg{lreg} \opn\ini{in} \opn\lpd{lpd}
	\opn\size{size} \opn\sdepth{sdepth}
	\opn\link{link}\opn\fdepth{fdepth}\opn\lex{lex}
	\opn\tr{tr}
	\opn\type{type}
	\opn\gap{gap}
	\opn\arithdeg{arith-deg}
	\opn\revlex{revlex}
	%
	\opn\div{div} \opn\Div{Div} \opn\cl{cl} \opn\Cl{Cl}
	%
	\opn\Spec{Spec} \opn\Supp{Supp} \opn\supp{supp} \opn\Sing{Sing}
	\opn\Ass{Ass} \opn\Min{Min}\opn\Mon{Mon}
	\opn\Ann{Ann} \opn\Rad{Rad} \opn\Soc{Soc}
	\opn\Im{Im} \opn\Ker{Ker} \opn\Coker{Coker} \opn\Am{Am}
	\opn\Hom{Hom} \opn\Tor{Tor} \opn\Ext{Ext} \opn\End{End}
	\opn\Aut{Aut} \opn\id{id}
	
	\opn\nat{nat}
	\opn\pff{pf}
	\opn\Pf{Pf} \opn\GL{GL} \opn\SL{SL} \opn\mod{mod} \opn\ord{ord}
	\opn\Gin{Gin} \opn\Hilb{Hilb}\opn\sort{sort}
	\opn\PF{PF}\opn\Ap{Ap}
	\opn\mult{mult}
	\opn\bight{bight}
	\opn\div{div}
	\opn\Div{Div}
	\opn\aff{aff}
	\opn\relint{relint} \opn\st{st}
	\opn\lk{lk} \opn\cn{cn} \opn\core{core} \opn\vol{vol}  \opn\inp{inp}
	\opn\nilpot{nilpot}
	\opn\link{link} \opn\star{star}\opn\lex{lex}\opn\set{set}
	\opn\width{wd}
	\opn\Fr{F}
	\opn\QF{QF}
	\opn\G{G}
	\opn\type{type}\opn\res{res}
	\opn\conv{conv}
	\opn\Int{Int}
	\opn\Deg{Deg}
	\opn\Sym{Sym}
	\opn\Con{Con}
	\opn\gr{gr}
	
	%
	
	\def\pot#1#2{#1[\kern-0.28ex[#2]\kern-0.28ex]}

	%
	\opn\dirlim{\underrightarrow{\lim}}
	\opn\inivlim{\underleftarrow{\lim}}
	%

	%

	\def\Implies{\ifmmode\Longrightarrow \else
		\unskip${}\Longrightarrow{}$\ignorespaces\fi}
	\def\implies{\ifmmode\Rightarrow \else
		\unskip${}\Rightarrow{}$\ignorespaces\fi}
	\def\iff{\ifmmode\Longleftrightarrow \else
		\unskip${}\Longleftrightarrow{}$\ignorespaces\fi}

	\let\:=\colon
	\newtheorem{Theorem}{Theorem}[section]
	\newtheorem{Lemma}[Theorem]{Lemma}
	
	\newtheorem{Proposition}[Theorem]{Proposition}
	\theoremstyle{definition}
	\newtheorem{Remark}[Theorem]{Remark}
	
	\newtheorem{Example}[Theorem]{Example}
	
    \newtheorem{Question}[Theorem]{Question}

    \makeatletter
\@namedef{subjclassname@2020}{%
  \textup{2020} Mathematics Subject Classification}
\makeatother

\textwidth=15cm \textheight=22cm \topmargin=0.5cm
	\oddsidemargin=0.5cm \evensidemargin=0.5cm

\begin{document}

\title[Bounded powers of edge ideals]{Bounded powers of edge ideals: Gorenstein toric rings}

\author[T.~Hibi]{Takayuki Hibi}
\author[S.~A.~ Seyed Fakhari]{Seyed Amin Seyed Fakhari}

\address{(Takayuki Hibi) Department of Pure and Applied Mathematics, Graduate School of Information Science and Technology, Osaka University, Suita, Osaka 565--0871, Japan}
\email{hibi@math.sci.osaka-u.ac.jp}
\address{(Seyed Amin Seyed Fakhari) Departamento de Matem\'aticas, Universidad de los Andes, Bogot\'a, Colombia}
\email{s.seyedfakhari@uniandes.edu.co}

\subjclass[2020]{Primary: 13F65, 13H10, 05E40}

\keywords{Bounded powers, Edge ideal, Toric ring, Gorenstein ring}

\begin{abstract}
Let $S=K[x_1, \ldots,x_n]$ denote the polynomial ring in $n$ variables over a field $K$ and $I \subset S$ a monomial ideal.  Given a vector $\mathfrak{c}\in\NN^n$, the ideal  $I_{\mathfrak{c}}$ is the ideal generated by those monomials belonging to $I$ whose exponent vectors are componentwise bounded above by $\mathfrak{c}$.  Let $\delta_{\mathfrak{c}}(I)$ be the largest integer $q$ for which $(I^q)_{\mathfrak{c}}\neq 0$.  For a finite graph $G$, its edge ideal is denoted by $I(G)$. Let $\mathcal{B}(\mathfrak{c},G)$ be the toric ring which is generated by the monomials belonging to the minimal system of monomial generators of $(I(G)^{\delta_{\mathfrak{c}}(I)})_{\mathfrak{c}}$. In a previous work, the authors proved that $(I(G)^{\delta_{\mathfrak{c}}(I)})_{\mathfrak{c}}$ is a polymatroidal ideal.  It follows that $\mathcal{B}(\mathfrak{c},G)$ is a normal Cohen--Macaulay domain.  In this paper, we study the Gorenstein property of $\mathcal{B}(\mathfrak{c},G)$.
\end{abstract}

\maketitle

\section*{Introduction}
Let $S=K[x_1, \ldots,x_n]$ denote the polynomial ring in $n$ variables over a field $K$ and $I \subset S$ a monomial ideal. Also, let $\NN$ denote the set of positive integers.  Given a vector $\mathfrak{c}=(c_1, \ldots, c_n) \in\NN^n$, the ideal $I_{\mathfrak{c}} \subset S$ is the ideal generated by those monomials $x_1^{a_1} \cdots x_n^{a_n}$ belonging to $I$ with $a_i \leq c_i$, for each $i=1, \ldots, n$.  Let $\delta_{\mathfrak{c}}(I)$ be the largest integer $q$ for which $(I^q)_{\mathfrak{c}}\neq 0$.  

Let $G$ be a finite graph with no loop, no multiple edge and no isolated vertex on the vertex set $V(G)=\{x_1, \ldots, x_n\}$ and $E(G)$ the set of edges of $G$.  Recall that the edge ideal of $G$ is the monomial ideal $I(G) \subset S$ generated by those $x_ix_j$ with $\{x_i, x_j\} \in E(G)$.  Let $\{w_1, \ldots, w_s\}$ denote the minimal set of monomial generators of $(I(G)^{\delta_{\mathfrak{c}}(I)})_{\mathfrak{c}}$ and $\mathcal{B}(\mathfrak{c},G)$ the toric ring $K[w_1, \ldots, w_s]\subset S$.  In \cite{HSF}, it is proved that $(I(G)^{\delta_{\mathfrak{c}}(I)})_{\mathfrak{c}}$ is a polymatroidal ideal. It then follows from \cite[Theorem 12.5.1]{HHgtm260} that $\mathcal{B}(\mathfrak{c},G)$ is a normal Cohen--Macaulay domain.  Naturally, one can ask when $\mathcal{B}(\mathfrak{c},G)$ is Gorenstein.  More precisely, 

\begin{Question}
\label{question}
Given a finite graph $G$ on $V(G)=\{x_1, \ldots, x_n\}$, find all possible $\mathfrak{c} \in \NN^n$ for which $\mathcal{B}(\mathfrak{c},G)$ is Gorenstein.
\end{Question}

However, one cannot expect a complete answer to Question \ref{question}.  For example, when $G$ is the star graph on $V(G)=\{x_1, \ldots, x_n, x_{n+1}\}$ with the edges $\{x_i, x_{n+1}\}$, $1 \leq i \leq n$, the answer to Question \ref{question} is exactly the classification of Gorenstein algebras of Veronese type (Example \ref{Ex}).  Its classification achieved in \cite{DH} by using the techniques on convex polytopes (\cite[p.~251]{HHgtm260}) is rather complicated. 

In the present paper, after summarizing notations and terminologies of graph theory in Section $1$, in Section $2$, it is shown that (i) for every finite graph $G$ on $V(G)=\{x_1, \ldots, x_n\}$ there exists a vector $\mathfrak{c} \in \NN^n$ for which $\mathcal{B}(\mathfrak{c},G)$ is Gorenstein (Theorem \ref{Gc}) and (ii) for every vector $\mathfrak{c} \in \NN^n$ there exists a finite graph $G$ on $V(G)=\{x_1, \ldots, x_n\}$ for which $\mathcal{B}(\mathfrak{c},G)$ is Gorenstein (Theorem \ref{cG}).  The highlight of the present paper is Section $3$, where it is proved that a finite graph $G$ on $V(G)=\{x_1, \ldots, x_n\}$ possesses the distinguished property that $\mathcal{B}(\mathfrak{c},G)$ is Gorenstein for all $\mathfrak{c} \in \NN^n$ if and only if there is an integer $t \geq 3$ such that each connected component of $G$ is either $K_2$ or $K_t$ (Theorem \ref{highlight}), where $K_t$ is the complete graph on $t$ vertices.  Finally, in Section $4$, we discuss Question \ref{question} for special classes of finite graphs. By virtue of the criterion \cite[Theorem 2.4]{DH} of Gorenstein algebras of Veronese type, we classify $\mathfrak{c}=(c_1, \ldots, c_{n})\in\NN^n$ for which $\mathcal{B}(\mathfrak{c},K_{n_1,\ldots, n_m}-M)$ is Gorenstein (Theorem \ref{completeAVT}), where $K_{n_1,\ldots, n_m}$ is a complete multipartite graph and $M$ is a (possibly empty) matching of it. Furthermore, by using the classification \cite[Remark 2.8]{oh} of Gorenstein edge rings of complete multipartite graphs, we classify trees $T$ on $n$ vertices satisfying ${\rm match}(T)=(n-2)/2$ for which $\mathcal{B}((1,1,\ldots,1),T)$ is Gorenstein (Theorem \ref{treegor}).

\section{Preliminaries}
We summarize notations and terminologies on finite graphs. Let $G$ be a finite graph with no loop, no multiple edge and no isolated vertex on the vertex set $V(G)=\{x_1, \ldots, x_n\}$ and $E(G)$ the set of edges of $G$.   

\begin{itemize}
 \item 
We say that $x_i \in V(G)$ is {\em adjacent} to $x_j \in V(G)$ in $G$ if $\{x_i,x_j\} \in E(G)$.  In addition, $x_j$ is called a {\em neighbor} of $x_i$. Let $N_G(x_i)$ denote the set of vertices of $G$ to which $x_i$ is adjacent. The cardianlity of $N_G(x_i)$ is the {\em degree} of $x_i$, denoted by ${\rm deg}_G(x_i)$. A {\em leaf} of $G$ is a vertex of degree one. Furthermore, if $A \subset V(G)$, then we set $N_G(A) := \cup_{x_i \in A} N_G(x_i)$.
\item 
We say that $e \in E(G)$ is {\em incident} to $x \in V(G)$ if $x \in e$.
 
    \item 
The {\em complete graph} $K_n$ is the finite graph on $V(K_n)=\{x_1, \ldots, x_n\}$ with $E(K_n) = \{ \{x_i,x_j\} : 1 \leq i < j \leq n \}$.  The {\em complete bipartite graph} $K_{n,m}$ is the finite graph on $V(K_{n,m})=\{x_1, \ldots, x_n\}\sqcup\{y_1, \ldots, y_m\}$ with $E(K_{n,m}) = \{ \{x_i, y_j\} : 1 \leq i \leq n, 1 \leq j \leq m \}$. The graph $K_{1,n}$ is called a {\em star} graph. In this case, the vertex of degree $n$ is the {\em center} of the graph.
 \item 
A {\em forest} is a finite graph with no cycle.  A {\em tree} is a connected forest.
 \item 
 A subset $C \subset V(G)$ is called {\em independent} if $\{x_i, x_j\} \not\in E(G)$ for all $x_i, x_j \in C$ with $x_i \neq x_j$. 
    \item 
A {\em matching} of $G$ is a subset $M \subset E(G)$ for which $e \cap e' = \emptyset$ for $e, e' \in M$ with $e \neq e'$.  We say that a matching $M$ of $G$ {\em covers} $x \in V(G)$ if there is $e \in M$ with $x \in e$.  The {\em matching number} of $G$ is the biggest possible cardinality of matchings of $G$.  Let ${\rm match}(G)$ denote the matching number of $G$.  A {\em maximal matching} of $G$ is a matching $M$ of $G$ for which there is no matching $M'$ of $G$ with $M \subsetneq M'$.  A {\em maximum matching} of $G$ is a matching $M$ of $G$ with $|M| = {\rm match}(G)$.  Every maximum matching is a maximal matching.  The {\em perfect matching} of $G$ is a matching $M$ of $G$ with $\cup_{e \in M}e = V(G)$.     
    \item 
If $M$ is a matching of $G$, then we define $G-M$ to be the finite graph obtained from $G$ by removing all edges belonging to $M$.  
\item 
If $U \subset V(G)$, then $G -  U$ is the finite graph on $V(G)\setminus U$ with $E(G - U) = \{e \in E(G) : e \cap U = \emptyset\}$.  In other words, $G - U$ is the {\em induced subgraph} $G_{V(G)\setminus U}$ of $G$ on $V(G)\setminus U$.   
    \item 
In the polynomial ring $S = K[x_1, \ldots, x_n]$, unless there is a misunderstanding, for an edge $e = \{x_i, x_j\}$, we employ the notation $e$ instead of the monomial $x_ix_j\in S$.  For example, if $e_1 = \{x_1, x_2\}$ and $e_2 = \{x_2, x_5\}$, then $e_1^2e_2 = x_1^2x_2^3x_5$. 
\end{itemize}

\section{Exsistence}
First of all, we show that (i) for every finite graph $G$ on $V(G)=\{x_1, \ldots, x_n\}$, there is a vector $\mathfrak{c} \in \NN^n$ for which $\mathcal{B}(\mathfrak{c},G)$ is Gorenstein and (ii) for every vector $\mathfrak{c} \in \NN^n$ there is a finite graph $G$ on $V(G)=\{x_1, \ldots, x_n\}$ for which $\mathcal{B}(\mathfrak{c},G)$ is Gorenstein. 

\begin{Theorem}
\label{Gc}
Given a finite graph $G$ on $V(G)=\{x_1, \ldots, x_n\}$, there is a vector $\mathfrak{c}\in \NN^n$ for which $\mathcal{B}(\mathfrak{c},G)$ is Gorenstein.
\end{Theorem}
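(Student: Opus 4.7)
The strategy I would adopt is to choose $\mathfrak{c}$ so rigidly that the ideal $(I(G)^{\delta_\mathfrak{c}(I(G))})_\mathfrak{c}$ becomes principal; then $\mathcal{B}(\mathfrak{c},G)$ is generated as a $K$-algebra by a single monomial, hence $\mathcal{B}(\mathfrak{c},G)\iso K[t]$ is a polynomial ring in one variable and therefore Gorenstein. With this in mind, the whole proof reduces to exhibiting a single vector $\mathfrak{c}$ for which the polymatroidal ideal in question collapses to one generator.

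Concretely, I would take $\mathfrak{c}=(\deg_G(x_1),\ldots,\deg_G(x_n))$. Since $G$ has no isolated vertex, every $\deg_G(x_i)$ is a positive integer, so $\mathfrak{c}\in\NN^n$. Because $\prod_{e\in E(G)} e = x_1^{\deg_G(x_1)}\cdots x_n^{\deg_G(x_n)}$, this monomial lies in $I(G)^{|E(G)|}$ and its exponent vector is exactly $\mathfrak{c}$; in particular $(I(G)^{|E(G)|})_\mathfrak{c}\neq 0$. Conversely, any monomial in $I(G)^q$ has total degree $2q$, while a monomial bounded componentwise by $\mathfrak{c}$ has total degree at most $\sum_i \deg_G(x_i)=2|E(G)|$. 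These two observations combine to give $\delta_\mathfrak{c}(I(G))=|E(G)|$.

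The remaining step is to note that a monomial contributing to $(I(G)^{|E(G)|})_\mathfrak{c}$ has total degree exactly $2|E(G)|$ with exponents bounded by $\mathfrak{c}$, whose coordinate sum is also $2|E(G)|$. Hence each exponent must equal its bound, and the monomial is forced to be $x_1^{\deg_G(x_1)}\cdots x_n^{\deg_G(x_n)}$. It follows that $(I(G)^{|E(G)|})_\mathfrak{c}$ is principal, and so $\mathcal{B}(\mathfrak{c},G)$ is the subalgebra of $S$ generated by the single monomial $x_1^{\deg_G(x_1)}\cdots x_n^{\deg_G(x_n)}$, which is $K$-isomorphic to a polynomial ring in one variable.

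I do not anticipate a serious obstacle: the argument is a degree-matching calculation that collapses the polymatroidal structure to a single generator, and the ``no isolated vertex'' hypothesis is precisely what keeps $\mathfrak{c}\in\NN^n$. The construction is intentionally coarse, using only the degree sequence of $G$, which explains why this existence theorem says nothing about the much finer question of which $\mathfrak{c}$ give Gorenstein $\mathcal{B}(\mathfrak{c},G)$ — the subject of the sections that follow.
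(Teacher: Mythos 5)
Your proof is correct and is essentially the paper's own argument: the paper also takes $u=\prod_{e\in E(G)}e$, whose exponent vector is $(\deg_G(x_1),\ldots,\deg_G(x_n))$, sets $\mathfrak{c}$ equal to it, and concludes $\mathcal{B}(\mathfrak{c},G)=K[u]$. Your write-up merely makes the degree-counting explicit (with one harmless slip: a monomial in the ideal $I(G)^q$ has degree \emph{at least} $2q$, not exactly $2q$, though the bound you need still follows).
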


\begin{proof}
Set $u:=\prod_{\{x_i,x_j\}\in E(G)}(x_ix_j)$ and $\mathfrak{c}$ the exponent vector of $u$.  It follows that $\mathcal{B}(\mathfrak{c},G) = K[u]$ is the polynomial ring in one variable and is Gorenstein.
\end{proof}

In the proof of Theorem \ref{Gc}, it follows that $\mathcal{B}(k\mathfrak{c},G) = K[u^k]$ is also the polynomial ring in one variable, where $k$ is a positive integer.  On the other hand, however, even thought $\mathcal{B}(\mathfrak{c},G)$ is Gorenstein, one cannot expect that $\mathcal{B}(k\mathfrak{c},G)$ is Gorenstein.  Let $G$ be the path of length $2$ on $V(G) = \{x_1,x_2,x_3\}$ with the edges $\{x_1,x_2\}$ and $\{x_2,x_3\}$.  Then $\mathcal{B}((1,1,1),G)$ is Gorenstein, but $\mathcal{B}((3,3,3),G)$ is not Gorenstein.

\begin{Theorem}
\label{cG}
Given a vector $\mathfrak{c}\in \NN^n$, the toric ring  $\mathcal{B}(\mathfrak{c},K_n)$ is Gorenstein. In particular, for any vector $\mathfrak{c}\in \NN^n$,  there is a finite graph $G$ on $V(G)=\{x_1, \ldots, x_n\}$ for which $\mathcal{B}(\mathfrak{c},G)$ is Gorenstein.
\end{Theorem}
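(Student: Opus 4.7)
The plan is to establish the stronger statement that $\mathcal{B}(\mathfrak{c},K_n)$ is in fact a polynomial ring over $K$, from which Gorensteinness follows immediately.

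First I would identify $(I(K_n)^q)_{\mathfrak{c}}$ explicitly. Since every pair of distinct vertices forms an edge of $K_n$, a monomial $x_1^{a_1}\cdots x_n^{a_n}$ of degree $2q$ belongs to $I(K_n)^q$ if and only if $a_i\leq q$ for every $i$; this is the standard fact that a multiset of size $2q$ with maximum multiplicity at most $q$ admits a partition into $q$ pairs of distinct elements (easy greedy argument). Setting $\delta:=\delta_{\mathfrak{c}}(I(K_n))$ and $c'_i:=\min(c_i,\delta)$, it follows that $(I(K_n)^{\delta})_{\mathfrak{c}}$ is the Veronese type ideal generated by all monomials of degree $2\delta$ whose exponents are bounded by $\mathfrak{c}':=(c'_1,\ldots,c'_n)$.

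Next I would analyse $\sigma':=\sum_i c'_i$. The maximality of $\delta$ yields both $\sigma'\geq 2\delta$ and $\sum_i\min(c_i,\delta+1)<2\delta+2$; subtracting these and using $\sum_i\min(c_i,\delta+1)-\sigma'=|\{i:c_i>\delta\}|$ gives $|\{i:c_i>\delta\}|<2\delta+2-\sigma'\leq 2$. Hence $\sigma'\in\{2\delta,2\delta+1\}$, and the case $\sigma'=2\delta+1$ forces $c'_i=c_i$ for every $i$.

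If $\sigma'=2\delta$, then the Veronese type ideal is principal with generator $u=\prod_i x_i^{c'_i}$, and $\mathcal{B}(\mathfrak{c},K_n)=K[u]$ is a polynomial ring in one variable. If $\sigma'=2\delta+1$, put $u=\prod_i x_i^{c_i}$; the minimal generators are exactly $u/x_1,\ldots,u/x_n$, and their exponent vectors $\cb-\eb_i$ form the rows of the $n\times n$ matrix $\mathbf{1}\cb^{T}-I$. By the matrix determinant lemma this has determinant $(-1)^n(1-\sigma')$, which is nonzero because $\sigma'\geq 3$. Hence the exponent vectors are $\QQ$-linearly independent, so $u/x_1,\ldots,u/x_n$ are algebraically independent over $K$ and $\mathcal{B}(\mathfrak{c},K_n)$ is a polynomial ring in $n$ variables.

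The only non-routine step is the determinant computation in the second case, which reduces immediately to the matrix determinant lemma. Everything else is bookkeeping based on the value of $\sigma'$.
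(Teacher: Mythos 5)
Your proposal is correct, and it reaches the same structural conclusion as the paper (that $\mathcal{B}(\mathfrak{c},K_n)$ is always a polynomial ring, in either one or $n$ variables), but the route through the key case is genuinely different. The paper splits on $\sum_i c_i-2\delta\in\{0,1,\geq 2\}$ and disposes of the hard case $2\delta\leq\sum_i c_i-2$ by a hands-on manipulation of edge decompositions: after ordering $c_1\geq\cdots\geq c_n$ it shows every generator $v=e_1\cdots e_\delta$ must have all its edges incident to a single vertex, forcing $v=x_1^{\delta}x_2^{c_2}\cdots x_n^{c_n}$ to be the unique generator. You instead begin with the characterization that a degree-$2q$ monomial lies in $I(K_n)^q$ if and only if every exponent is at most $q$ (the standard multiset-pairing fact), which identifies $\mathcal{B}(\mathfrak{c},K_n)$ with the Veronese-type algebra $A(2\delta;\mathfrak{c}')$ for the truncated vector $c_i'=\min(c_i,\delta)$; the maximality of $\delta$ then reduces everything to the purely numerical dichotomy $\sum_i c_i'\in\{2\delta,2\delta+1\}$. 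This buys you a uniform treatment that absorbs the paper's third case into the principal-generator case without any edge shuffling, and it makes transparent \emph{why} only the two extreme shapes of generating set can occur; it also supplies, via the matrix determinant lemma, an explicit proof of the algebraic independence of $u/x_1,\ldots,u/x_n$, which the paper merely asserts. The paper's argument, by contrast, is self-contained at the level of edge ideals and is reused verbatim in Lemma \ref{dimcomp} and Remark \ref{remcomp} to pin down the dimension, so if you adopted your version you would want to note that the dimension is $1$ or $n$ according to the two cases, which your dichotomy gives immediately.
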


\begin{proof}
We only need to prove the first statement. Set $\delta:=\delta_{\mathfrak{c}}(I(K_n)$ and $u:=x_1^{c_1}\cdots x_n^{c_n}$.  One has $2\delta\leq \sum_{i=1}^nc_i$. If $2\delta=\sum_{i=1}^nc_i$, then $\mathcal{B}(\mathfrak{c},G)=K[u]$ is the polynomial ring in one variable and is Gorenstein.  If $2\delta=\sum_{i=1}^nc_i-1$, then $\mathcal{B}(\mathfrak{c},G)$ is generated by a subset of $\{u/x_{i}: i=1, \ldots, n\}$. Since the monomials $u/x_{1}, \ldots, u/x_{n}$ are algebraically independent, it follows that $\mathcal{B}(\mathfrak{c},G)$ is a polynomial ring in at most $n$ variables and is Gorenstein.

Now suppose that $2\delta \leq \sum_{i=1}^nc_i-2$.  One may assume that $c_1\geq c_2\geq\ldots \geq c_n$.  Let $v=e_1\cdots e_{\delta}=x_1^{a_1}\cdots x_n^{a_n}$ belong to $(I(K_n)^{\delta})_{\mathfrak{c}}$.  If there are integers $1\leq i < j\leq n$ with $a_i\leq c_i-1$ and $a_j\leq c_j-1$, then $v(x_ix_j)$ is a $\mathfrak{c}$-bounded monomial, contradicting the definition of $\delta$.  Thus, there is an integer $1\leq k\leq n$ for which $a_k\leq c_k-2$ and $a_{\ell}=c_{\ell}$ for each $\ell\neq k$.  If in the representation of $v$ as $v=e_1\cdots e_{\delta}$, there is an edge, say, $e_1=x_px_q$ which is not incident to $x_k$, then the monomial
$$vx_k^2=(x_px_k)(x_qx_k)e_2\cdots e_{\delta}$$ 
is a $\mathfrak{c}$-bounded monomial, contradicting the definition of $\delta$. Therefore, all the edges $e_1, \ldots, e_{\delta}$ are incident to $x_k$. Hence, $a_k=\delta$. If $k\geq 2$, then 
$$c_k\geq a_k+2=\delta+2\geq a_1+2=c_1+2$$
which contradicts our assumption $c_1\geq c_2\geq \ldots\geq c_n$.  Thus, $k=1$ and $v=x_1^{\delta}x_2^{c_2}\cdots x_n^{c_n}$.  It then follows that $\mathcal{B}(\mathfrak{c},G) = K[v]$ is the polynomial in one variable and is Gorenstein, as desired.
\end{proof}

\begin{Remark} \label{remcomp}
The proof of Theorem \ref{cG} shows that for any vector $\mathfrak{c}\in \NN^n$, the toric ring  $\mathcal{B}(\mathfrak{c},K_n)$ is isomorphic to a polynomial ring of dimension at most $n$. In Lemma \ref{dimcomp}, we show that the dimension of this toric ring is either one or $n$. 
\end{Remark}

\begin{Example}
\label{Ex}
Fix a positive integer $d$ and a vector $\mathfrak{a}=(a_1, \ldots, a_n)\in \NN^n$ with each $1 \leq a_i \leq d$ and $d \leq \sum_{i=1}^na_i$.  Recall from \cite{DH} that the {\em algebra of Veronese type} $A(d;\mathfrak{a})$ is the toric ring which is generated by those monomials $x_1^{q_1}\cdots x_n^{q_n}$ with each $q_i \leq a_i$ and with $\sum_{i=1}^nq_i = d$.  Let $G$ be the star graph with vertex set $V(G)=\{x_1, \ldots, x_{n+1}\}$ and $x_{n+1}$ its center. Assume that $(\mathfrak{a},d)$ denotes the vector of length $n+1$ which is defined as follows: for each $i=1, \ldots, n$, the $i$th component of $(\mathfrak{a},d)$ is $a_i$ and the last component of $(\mathfrak{a},d)$ is $d$. Since $\mathcal{B}((\mathfrak{a},d), G)$ is generated by those monomials of the form $ux_{n+1}^d$, where $u$ is a $\mathfrak{a}$-bounded monomial of degree $d$, it follows that $A(d;\mathfrak{a}) \cong \mathcal{B}((\mathfrak{a},d),G)$.   
\end{Example}

\section{Complete graphs}
Recall that Theorem \ref{cG} claims that the complete graph $K_n$ has the distinguished property that, for every $\mathfrak{c}\in \NN^n$, the toric ring $\mathcal{B}(\mathfrak{c},K_n)$ is Gorenstein.  One can ask if there is another class of finite graphs with this distinguished property. We answer this question in Theorem \ref{highlight}.

\begin{Lemma} \label{dimcomp}
Let $n\geq 3$ be an integer and $\mathfrak{c} = (c_1, \ldots, c_n)\in \NN^n$.  One has either $\dim\mathcal{B}(\mathfrak{c},K_n)=1$ or $\dim\mathcal{B}(\mathfrak{c},K_n)=n$.
\end{Lemma}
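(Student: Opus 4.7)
The plan is to revisit the case analysis carried out in the proof of Theorem~\ref{cG}. Setting $\delta := \delta_{\mathfrak{c}}(I(K_n))$ and $u := x_1^{c_1}\cdots x_n^{c_n}$, one always has $2\delta \leq \sum_{i=1}^n c_i$. Two of the three cases appearing in that proof, namely $2\delta = \sum_{i=1}^n c_i$ and $2\delta \leq \sum_{i=1}^n c_i - 2$, already exhibit $\mathcal{B}(\mathfrak{c},K_n) = K[v]$ for a single monomial $v$, hence $\dim \mathcal{B}(\mathfrak{c},K_n) = 1$. It therefore suffices to analyze the remaining case $2\delta = \sum_{i=1}^n c_i - 1$.

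In that case, every minimal monomial generator of $(I(K_n)^{\delta})_{\mathfrak{c}}$ has degree $\sum_{i=1}^n c_i - 1$ with exponents componentwise bounded by $\mathfrak{c}$, so it must equal $u/x_i$ for some $i$. Since $u/x_1,\ldots,u/x_n$ are algebraically independent, $\mathcal{B}(\mathfrak{c},K_n)$ is a polynomial ring of Krull dimension $s := |\{i : u/x_i \in (I(K_n)^{\delta})_{\mathfrak{c}}\}|$. The key ingredient needed next is a multigraph-realization fact: a monomial $\prod_j x_j^{a_j}$ of degree $2\delta$ lies in $I(K_n)^{\delta}$ if and only if each $a_j \leq \delta$. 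Granting this, $u/x_i$ belongs to the ideal iff $c_j \leq \delta$ for every $j \neq i$ and $c_i \leq \delta + 1$. If every $c_j \leq \delta$, then all $n$ monomials $u/x_i$ appear and $s = n$. Otherwise some $c_k > \delta$; the hypothesis $n \geq 3$ rules out a second such index, because $c_k, c_\ell \geq \delta+1$ with $k \neq \ell$ combined with $c_j \geq 1$ for the remaining variables would give $\sum_{i=1}^n c_i \geq 2(\delta+1) + (n-2) \geq 2\delta + 3$, contradicting $\sum_{i=1}^n c_i = 2\delta + 1$. Hence the large exponent $c_k$ is unique, the bound $c_k \leq \delta + 1$ forces $c_k = \delta + 1$, only $u/x_k$ belongs to the ideal, and $s = 1$.

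The step I expect to require the most care is the realization fact itself. I would prove it by induction on $\delta$: given $(a_1,\ldots,a_n)$ with $\sum_j a_j = 2\delta$ and each $a_j \leq \delta$, select indices $p \neq q$ with $a_p, a_q > 0$ in such a way that the reduced vector (with $a_p$ and $a_q$ each decreased by $1$) still has all entries $\leq \delta - 1$. This is possible because at most two coordinates can equal $\delta$ (three would already contribute $3\delta > 2\delta$ to the sum): if $|\{j : a_j = \delta\}| = 2$, pair those two indices; if exactly one coordinate equals $\delta$, pair it with any other positive coordinate; if none equals $\delta$, pair any two positive coordinates. Factoring out the edge $x_p x_q$ and applying the inductive hypothesis then completes the argument.
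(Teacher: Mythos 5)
Your proof is correct, but it follows a genuinely different route from the paper's. The paper, after reducing (as you do) to the case $2\delta=\sum_i c_i-1$, fixes one generator $u/x_1=e_1\cdots e_\delta$ and shows by an edge-swapping argument that either it is the unique generator (when every $e_t$ passes through $x_1$, forcing $c_1=\sum_{j\geq 2}c_j+1$) or else every $u/x_k$ can be produced from it by exchanging one or two edges, so the dimension jumps to $n$. You instead characterize membership exactly: $u/x_i\in I(K_n)^\delta$ iff its exponent vector satisfies the classical multigraph degree-sequence realization criterion ($a_j\leq\delta$ for all $j$), which you prove by a correct greedy induction, and then you simply count the admissible indices, using $n\geq 3$ to show that at most one coordinate can exceed $\delta$. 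Your approach buys an explicit description of \emph{when} each alternative occurs (all $c_j\leq\delta$ versus a unique $c_k=\delta+1$), at the cost of importing and proving the realization lemma; the paper's argument is more ad hoc but entirely self-contained. One step you should make explicit: in the case where some $c_k>\delta$, the inequality $c_k\leq\delta+1$ does not come for free --- it follows because $(I(K_n)^\delta)_{\mathfrak{c}}\neq 0$ by the definition of $\delta$, every $\mathfrak{c}$-bounded monomial in $I(K_n)^\delta$ is divisible by some $u/x_i$ lying in the ideal, and you have already excluded every $i\neq k$, so $u/x_k$ must belong, whence $c_k-1\leq\delta$. As written, you assert the bound before establishing membership; the fix is immediate, so this is a presentational gap rather than a mathematical one.
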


\begin{proof}
Set $\delta:=\delta_{\mathfrak{c}}(I(K_n)$. It follows from the proof of Theorem \ref{cG} that $\mathcal{B}(\mathfrak{c},K_n)$ is a polynomial ring and, if $\dim \mathcal{B}(\mathfrak{c},K_n)> 1$, then $2\delta=\sum_{i=1}^nc_i-1$.  Set $u:=\prod_{i=1}^nx_i^{c_i}$ and suppose that $\dim \mathcal{B}(\mathfrak{c},K_n)> 1$.  Let, say, $u/x_1 \in \mathcal{B}(\mathfrak{c},K_n)$.  We claim that $u/x_{k}\in \mathcal{B}(\mathfrak{c},K_n)$ for each $1 < k\leq n$.  This proves that $\dim\mathcal{B}(\mathfrak{c},K_n)=n$. Let $u/x_1=e_1\cdots e_{\delta}$, where $e_1, \ldots, e_{\delta}$ are edges of $K_n$.  If each $e_t$ is incident to $x_1$, then $\delta=c_1-1$ and it follows from $2\delta=\sum_{i=1}^nc_i-1$ that $c_1 = \sum_{j=2}^nc_j+1$.  Hence, $u/x_1$ is the only generator of $\mathcal{B}(\mathfrak{c},K_n)$ which implies that $\dim\mathcal{B}(\mathfrak{c},K_n)=1$. This is a contradiction, as we are assuming that $\dim\mathcal{B}(\mathfrak{c},K_n)> 1$. Thus, there is an edge $e_t$ with $1 \leq t \leq \delta$ which is not incident to $x_1$.  Let, say, $t=1$. Since $c_k \geq 1$, it follows that $x_k$  divides $u/x_1$. Thus, there is an integer $p$ with $1\leq p\leq \delta$ for which $e_p$ is incident to $x_k$. If $p=1$, then
$$u/x_k=(x_1e_1/x_k)e_2\cdots e_{\delta}\in\mathcal{B}(\mathfrak{c},K_n)$$
and we are done.  Suppose that $p\neq 1$, say, $p=2$. Then
$$u/x_k=(x_1e_1e_2/x_k)e_3\cdots e_{\delta}\in\mathcal{B}(\mathfrak{c},K_n),$$ as desired.
\end{proof}

\begin{Lemma}
\label{exvec}
Let $n\geq 3$ be an integer. Then there is a vector $\mathfrak{c} \in \NN^n$ for which $\mathcal{B}(\mathfrak{c},K_n)$ is isomorphic to the polynomial ring in $n$ variables over $K$.
\end{Lemma}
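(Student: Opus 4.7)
The plan is to reduce the question to exhibiting a single vector $\mathfrak{c}\in\NN^n$ for which $\dim\mathcal{B}(\mathfrak{c},K_n)>1$. Indeed, by Lemma~\ref{dimcomp} this dimension is either $1$ or $n$, while by Remark~\ref{remcomp} the ring $\mathcal{B}(\mathfrak{c},K_n)$ is already known to be a polynomial ring of dimension at most $n$; so any $\mathfrak{c}$ ruling out dimension $1$ will automatically realize $\mathcal{B}(\mathfrak{c},K_n)$ as the polynomial ring in $n$ variables over $K$. The second half of the proof of Lemma~\ref{dimcomp} makes precise what kind of $\mathfrak{c}$ to hunt for: we want $\sum_{i=1}^n c_i$ to be odd (so that the monomial $u=x_1^{c_1}\cdots x_n^{c_n}$ itself cannot lie in any power of $I(K_n)$) and we want at least one of the quotients $u/x_i$ to appear in $(I(K_n)^{\delta_{\mathfrak{c}}(I(K_n))})_{\mathfrak{c}}$; the propagation argument inside Lemma~\ref{dimcomp} will then promote one such $u/x_i$ into all of them.

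I would split on the parity of $n$. If $n$ is odd, take $\mathfrak{c}=(1,1,\ldots,1)$. Then $u=x_1\cdots x_n$ has odd degree $n$, whereas every element of $I(K_n)^q$ has even degree $2q$, forcing $\delta_{\mathfrak{c}}(I(K_n))\leq (n-1)/2$; equality is witnessed, for each fixed index $i$, by any matching of $K_n$ of size $(n-1)/2$ missing $x_i$ (such a matching exists since $n-1$ is even), whose edge-product equals $u/x_i$. If $n$ is even, take $\mathfrak{c}=(2,1,\ldots,1)$. Then $\sum c_i=n+1$ is odd, and again by parity $u=x_1^2 x_2\cdots x_n$ cannot be written as a product of edges, so $\delta_{\mathfrak{c}}(I(K_n))\leq n/2$; equality is witnessed by any perfect matching of $K_n$, whose edge-product is $u/x_1$. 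Invoking the propagation step in the proof of Lemma~\ref{dimcomp}, every $u/x_k$ then also belongs to $\mathcal{B}(\mathfrak{c},K_n)$.

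In either case $(I(K_n)^{\delta_{\mathfrak{c}}(I(K_n))})_{\mathfrak{c}}$ has more than one minimal generator, so $\dim\mathcal{B}(\mathfrak{c},K_n)\neq 1$ and hence equals $n$, completing the argument. The only point requiring care is the verification of the claimed value of $\delta_{\mathfrak{c}}(I(K_n))$, but this is merely a parity-of-degree observation combined with the explicit matchings just described, so no substantial obstacle arises; the construction itself is the entire content of the proof.
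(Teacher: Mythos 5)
Your proposal is correct and follows essentially the same route as the paper: split on the parity of $n$ and exhibit an explicit vector, using $(1,\ldots,1)$ for odd $n$ exactly as the paper does. For even $n$ you take $(2,1,\ldots,1)$ where the paper takes $(2,\ldots,2,1)$, but both choices make $\sum_i c_i$ odd and put all $n$ quotients $u/x_i$ among the generators, so this is only a cosmetic variation; your extra appeal to the propagation step of Lemma~\ref{dimcomp} is sound and, if anything, supplies detail the paper leaves implicit.
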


\begin{proof}
Let $n$ be odd and $\mathfrak{c}:=(1, \ldots, 1)$.  It then follows that $$\mathcal{B}(\mathfrak{c},K_n) = K[u/x_1, \ldots, u/x_n],$$
where $u=x_1\cdots x_n$.  Hence, $\mathcal{B}(\mathfrak{c},K_n)$ is the polynomial ring in $n$ variables. 

Let $n$ be even and $\mathfrak{c}=(2, \ldots, 2, 1)$.  It then follows that
$$\mathcal{B}(\mathfrak{c},K_n)=K[v/x_1, \ldots, v/x_n],$$
where $v=x_1^2\cdots x_{n-1}^2x_n$.  Hence, $\mathcal{B}(\mathfrak{c},K_n)$ is the polynomial ring in $n$ variables. 
\end{proof}

\begin{Lemma}
\label{nocomp}
Let $G$ be a finite graph on $V(G) = \{x_1, \ldots,x_n\}$ such that at least one connected component of $G$ is not a complete graph. Then there is a vector $\mathfrak{c}\in \NN^n$ for which the toric ring
$\mathcal{B}(\mathfrak{c},G)$ is not Gorenstein. 
\end{Lemma}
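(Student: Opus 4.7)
The plan is to first reduce to the case that $G$ itself is connected and non-complete, and then exploit the induced $P_3$ that such a graph must contain. For the reduction, let $G_1$ denote the non-complete component of $G$ and let $G_2,\dots,G_k$ be the remaining components. By Theorem~\ref{Gc}, for each $j\geq 2$ one can choose $\mathfrak{c}_j\in\NN^{|V(G_j)|}$ so that $\mathcal{B}(\mathfrak{c}_j,G_j)=K[v_j]$ is a polynomial ring in one monomial $v_j$ supported on $V(G_j)$. Because the vertex sets $V(G_j)$ are pairwise disjoint, for any $\mathfrak{c}_1\in\NN^{|V(G_1)|}$ the concatenated vector $\mathfrak{c}$ satisfies $\delta_{\mathfrak{c}}(I(G))=\sum_j\delta_{\mathfrak{c}_j}(I(G_j))$, and the minimal generators of $(I(G)^{\delta_{\mathfrak{c}}(I(G))})_{\mathfrak{c}}$ are exactly the monomials $w\cdot\prod_{j\geq 2}v_j$ with $w$ a minimal generator of $(I(G_1)^{\delta_{\mathfrak{c}_1}(I(G_1))})_{\mathfrak{c}_1}$. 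Multiplication by the fixed monomial $\prod_{j\geq 2}v_j$ in disjoint variables yields a $K$-algebra isomorphism $\mathcal{B}(\mathfrak{c},G)\cong\mathcal{B}(\mathfrak{c}_1,G_1)$, so it suffices to find $\mathfrak{c}_1$ making $\mathcal{B}(\mathfrak{c}_1,G_1)$ non-Gorenstein under the assumption that $G$ itself is connected and not complete.

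Now assume $G$ is connected, non-complete, and has $n\geq 3$ vertices. A shortest path between any two non-adjacent vertices has length exactly two and yields an induced $P_3$: vertices $x_a,x_b,x_c$ with $\{x_a,x_b\},\{x_b,x_c\}\in E(G)$ and $\{x_a,x_c\}\notin E(G)$. For a positive integer $N$ to be tuned, define $\mathfrak{c}$ by $c_a=c_b=c_c=N$ and $c_i=1$ for every $x_i\notin\{x_a,x_b,x_c\}$. The choice $c_i=1$ at each auxiliary vertex forces every $\mathfrak{c}$-bounded monomial in $I(G)^\delta$ to be squarefree in the auxiliary variables, hence to factor as $(x_ax_b)^{a_1}(x_bx_c)^{a_2}\cdot u$ where $u$ is a squarefree monomial supported on $V(G)\setminus\{x_a,x_b,x_c\}$.

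For $N$ sufficiently large, the constraints at $x_a,x_b,x_c$ become non-binding on the auxiliary factor $u$, and a careful enumeration identifies $\mathcal{B}(\mathfrak{c},G)$, after dividing by a common factor of the form $x_b^N\cdot u_0$ and applying the standard monomial-multiplication isomorphism, with an algebra of Veronese type $A(d;\mathbf{a})$ whose parameters depend on $N$ and on the combinatorics of $G-\{x_a,x_b,x_c\}$. In the base case $G=P_3$ one obtains precisely the $N$-th Veronese subring of $K[x_ax_b,x_bx_c]\cong K[s,t]$, whose $h$-vector $(1,N-1)$ is non-symmetric and hence non-Gorenstein for $N\geq 3$; the $G=P_4$ case yields the $(N-1)$-st Veronese of $K[s,t]$, non-Gorenstein for $N\geq 4$; and for general $G$ one either computes the Hilbert series of $A(d;\mathbf{a})$ directly to read off a non-symmetric $h$-vector, or invokes the Gorenstein criterion \cite[Theorem~2.4]{DH}. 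The main obstacle is establishing the Veronese-type identification uniformly in $G$: one must control the combinatorial interaction between edges incident to $\{x_a,x_b,x_c\}$ and the auxiliary matchings in $G-\{x_a,x_b,x_c\}$, which I would handle by a case analysis on the neighbourhoods of $x_a,x_b,x_c$ together with choosing $N$ larger than the number of vertices of $G$ so that the $c_a,c_b,c_c$-constraints decouple from the auxiliary ones.
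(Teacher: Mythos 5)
Your reduction to a single connected non-complete component is fine (and is a mild streamlining of what the paper does implicitly by assigning weights $f_i\geq 1$ to vertices outside $A\cup B$), and your base cases $P_3$, $P_4$ are correct. But the core of the argument is missing, and the one structural claim you do make toward it is false. You assert that setting $c_i=1$ on the auxiliary vertices forces every $\mathfrak{c}$-bounded monomial of $I(G)^{\delta}$ to factor as $(x_ax_b)^{a_1}(x_bx_c)^{a_2}\cdot u$ with $u$ supported on $V(G)\setminus\{x_a,x_b,x_c\}$. This ignores the edges joining $x_a,x_b,x_c$ to auxiliary vertices: already for the star $K_{1,n}$ with center $x_b$, the maximal bounded monomials are $(x_ax_b)^{\alpha}(x_bx_c)^{\beta}\prod_i(x_bx_i)^{\epsilon_i}$ with $\alpha+\beta+\sum\epsilon_i=N$, so the auxiliary variables genuinely enter the generators, the ring is $A(N;(N,N,1,\ldots,1))$, and whether it is Gorenstein depends on $N$ and $n$ in a way you would still have to control via \cite[Theorem 2.4]{DH}. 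In general the interaction between the three distinguished vertices and the rest of the graph is exactly the hard part, and your proposal defers it entirely to an unspecified ``case analysis on the neighbourhoods.'' There is also no reason to expect a generic induced $P_3$ with a single large parameter $N$ to work: the resulting algebra can be Gorenstein for some graphs and choices of $N$, so one must prove non-Gorensteinness, not merely identify a Veronese-type presentation.

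For comparison, the paper's proof is built precisely around taming this interaction. It chooses the non-adjacent pair $x_1,x_2$ with a common neighbor so that $|N_G(x_1)\cup N_G(x_2)|$ is minimal, sets $B=N_G(x_1)\cup N_G(x_2)$ and $A=\{x_i\notin B: N_G(x_i)\subseteq B\}$, and assigns weights $2a_i+2$ on $A$, $2b_i$ on $B$, and $f_i$ elsewhere. These weights are rigged so that (Claim 1) $2\delta=\sum c_i-2|A|$ and every maximal bounded monomial is forced to saturate the bounds on $B$ and on $V(G)\setminus(A\cup B)$, leaving only a degree-$2\sum a_i$ factor on the independent set $A$; then (Claim 2) a Marriage Theorem argument, which is where the minimality of $|N_G(x_1)\cup N_G(x_2)|$ is used, shows that \emph{every} admissible monomial on $A$ actually occurs. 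This identifies $\mathcal{B}(\mathfrak{c},G)$ with $A\bigl(2\sum a_i;(2a_1+2,\ldots,2a_m+2)\bigr)$, which the De Negri--Hibi criterion rules out as Gorenstein unconditionally. Your sketch would need analogues of both claims, and as written the first one fails.
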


\begin{proof}
Let $x_p$ and $x_{p'}$ be non-adjacent vertices belonging to a connected component of $G$ which is not a complete graph.  Combining $x_p$ and $x_{p'}$ by a path in $G$, it follows that $G$ has non-adjacent vertices, say, $x_1, x_2$, which have a common neighbor, say $x_n$.  Furthermore, we assume that $|N_G(x_1)\cup N_G(x_2)|$ is smallest among all pairs of non-adjacent vertices with at least one common neighbor. Set $B:=N_G(x_1)\cup N_G(x_2)$. Let $A$ be the set of all vertices $x_i\in V(G)\setminus B$ for which $N_G(x_i)\subseteq B$. In particular, $x_1, x_2\in A$ and $x_n\notin A$. If two distinct vertices $x_i, x_j\in A$ are adjacent in $G$, then $x_i\in N_G(x_j)\subseteq B$, a contradiction. Thus, $A$ is an independent set of $G$.  Let $A=\{x_1, x_2, \ldots, x_m\}$, where  $2\leq m\leq n-1$.  For each $x_i\in A$, let $a_i$ denote the number of neighbors of $x_i$ in $B$ (which is equal to ${\rm deg}_G(x_i)$).  For each $x_i\in B$, let $b_i$ denote the number of neighbors of $x_i$ in $A$ (note that $b_i\geq 1$).  Moreover, for each $x_i\in V(G)\setminus (A\cup B)$, let $f_i$ denote the number of neighbors of $x_i$ in $V(G)\setminus (A\cup B)$.  If $x_i\in V(G)\setminus (A\cup B)$, then, since $N_G(x_i) \not\subset B$, there is a vertex $x_j \in N_G(x_i) \setminus B$.  Since $x_i \in N_G(x_j)$, one has $x_j \not\in A$.  Therefore, $x_j \in V(G)\setminus (A\cup B)$.  In other words, each $x_i\in V(G)\setminus (A\cup B)$ has at least one neighbor in $V(G)\setminus (A\cup B)$.  Thus, $f_i\geq 1$.  

Now, we introduce a vector $\mathfrak{c}=(c_1, \ldots, c_n)\in \NN^n$ defined by
\[
c_i=
\begin{cases}
  2a_i+2 & \text{if $x_i\in A$},\\
  2b_i & \text{if $x_i\in B$},\\
  f_i & \text{if $x_i\in V(G)\setminus (A\cup B)$}.
\end{cases}
\]
Set $\delta:=\delta_{\mathfrak{c}}(I(G))$.

\medskip

{\bf Claim 1.} $2\delta=(c_1+\cdots +c_n)-2|A|$.

\begin{proof}[Proof of Claim 1]
Let $\mathcal{E}_1$ denote the set of all edges $e$ of $G$ for which $e \cap A \neq \emptyset$ and $e \cap B \neq \emptyset$ and $\mathcal{E}_2$ the set of all edges $e$ of $G$ for which $e \subset V(G)\setminus (A\cup B)$.
Set$$u=\prod_{e\in \mathcal{E}_1}e^2\prod_{e\in\mathcal{E}_2}e,$$
where $\prod_{e\in\mathcal{E}_2}e=1$ if $\mathcal{E}_2=\emptyset$.  Then $u$ is a $\mathfrak{c}$-bounded monomial and$${\rm deg}(u)=2\sum_{x_i\in A}a_i+2\sum_{x_i\in B}b_i+\sum_{x_i\notin A\cap B}f_i=(c_1+\cdots +c_n)-2|A|.$$
It then follows that $2\delta\geq(c_1+\cdots +c_n)-2|A|$.  Let $v=e_1\cdots e_q$ be a $\mathfrak{c}$-bounded monomial, where $e_1, \ldots, e_q\in E(G)$.  Thus, for each $x_i\in V(G)\setminus (A\cup B)$ one has ${\rm deg}_{x_i}(v)\leq f_i$ and for each  $x_i\in B$ one has ${\rm deg}_{x_i}(v)\leq 2b_i$. Since $A$ is an independent set of $G$ with $N_G(A)=\cup_{x_i \in A} N_G(x_i) = B$, one has $$\sum_{x_i\in A}{\rm deg}_{x_i}(v)\leq \sum_{x_i\in B}{\rm deg}_{x_i}(v)\leq 2\sum_{x_i\in B}b_i=2\sum_{x_i\in A}a_i.$$Consequently,
\begin{eqnarray*}
{\rm deg}(v) &=& \sum_{x_i\in A}{\rm deg}_{x_i}(v)+\sum_{x_i\in B}{\rm deg}_{x_i}(v)+\sum_{x_i\notin A\cup B}{\rm deg}_{x_i}(v) \\
& \leq & 2\sum_{x_i\in A}a_i+2\sum_{x_i\in B}b_i+\sum_{x_i\notin A\cup B}f_i \\ &=& \sum_{i=1}^nc_i-2|A|.
\end{eqnarray*}
Hence, $2\delta=(c_1+\cdots +c_n)-2|A|$, as desired.
\end{proof}

Let $w \in (I(G)^{\delta})_\mathfrak{c}$ be a $\mathfrak{c}$-bounded monomial.  The above proof of Claim 1 shows that $w$ must be divisible by the monomial $$w' = \prod_{x_i\in B}x_i^{2b_i}\prod_{x_i\notin A\cup B}x_i^{f_i}$$ and $w=w'w''$, where $w''$ is a monomial on the variables $\{x_i : x_i\in A\}$ with 
$$\deg (w'') = \sum_{x_i\in A}{\rm deg}_{x_i}(w)=2\sum_{x_i\in A}a_i.$$
Moreover,$${\rm deg}_{x_i}(w'')\leq 2a_i+2$$ for each $x_i\in A$.

\medskip

{\bf Claim 2.} 
Let $u_0$ be a monomial on $\{x_i : x_i\in A\}$ with $\deg (u_0) = 2\sum_{x_i\in A}a_i$ and with ${\rm deg}_{x_i}(u_0)\leq 2a_i+2$ for each $x_i\in A$. Then $w'u_0\in (I(G)^{\delta})_\mathfrak{c}$.

\begin{proof}[Proof of Claim 2]
We first introduce the bipartite graph $H$ with the vertex set $V(H)=A'\sqcup B'$, where
\begin{eqnarray*}
&A':=\big\{x_{ij}: x_i \in A \ {\rm divides} \ u_0 \ {\rm and} \ 1\leq j\leq {\rm deg}_{x_i}(u_0)\big\},& \\
&B'=\{x_{ij}: x_i\in B \ {\rm and} \ 1\leq j\leq 2b_i\}.&
\end{eqnarray*}
The edges of $H$ are those $\{x_{st}, x_{k\ell}\}$, where $x_{st}\in A'$ and $x_{k\ell}\in B'$, for which $x_s\in A$ and $x_k\in B$ are adjacent in $G$.  Thus
$$|A'|={\rm deg}(u_0)=2\sum_{x_i\in A}a_i=2\sum_{x_i\in B}b_i=|B'|.$$
Our work is to show that $H$ has a perfect matching. By using Marriage Theorem \cite[Lemma 9.1.2]{HHgtm260}, it is enough to prove that for each nonempty subset $A''\subseteq A'$, one has $|N_H(A'')|\geq |A''|$. Let $\sigma (A'')$ be the set of those $x_i \in A$ for which there is $1 \leq j \leq {\rm deg}_{x_i}(u_0)$ with $x_{ij}\in A''$.  We consider the following two cases.

\smallskip

{\bf Case 1.} Suppose that $\sigma(A'')\subseteq \{x_1, x_2\}$. If $\sigma(A'')=\{x_1, x_2\}$, then $N_H(A'')=B'$ and the inequality $|N_H(A'')|\geq |A''|$ is trivial. Suppose that $|\sigma(A'')|=1$, say, $\sigma(A'')=\{x_1\}$. Then, since $x_1,x_2\in N_G(x_n)$, we deduce that
\begin{eqnarray*}
|A''|  &\leq& {\rm deg}_{x_1}(u_0)\leq 2a_1+2=2|N_G(x_1)|+2 \\
&=&2|N_G(x_1)\setminus\{x_n\}|+4 \leq 2\sum_{x_i\in N_G(x_1)\setminus\{x_n\}}b_i+2b_n \\
&=&2\sum_{x_i\in N_G(x_1)}b_i=|N_H(A'')|,
\end{eqnarray*}
as required.

\smallskip

{\bf Case 2.} Suppose that $\sigma(A'')\nsubseteq \{x_1, x_2\}$. If $\{x_1,x_2\}\subset \sigma(A'')$, then $N_H(A'')=B'$ and the inequality $|N_H(A'')|\geq |A''|$ is trivial. So, suppose that $\{x_1,x_2\}\nsubseteq \sigma(A'')$. Without loss of generality, we may assume that $x_2 \not\in \sigma(A'')$. If there are two distinct vertices $x_r, x_{r'}\in \sigma(A'')$ with $N_G(x_r)\cap N_G(x_{r'})\neq \emptyset$, then it follows from the minimlity of $|N_G(x_1)\cup N_G(x_2)|$ that $N_G(x_r)\cup N_G(x_{r'})=N_G(x_1)\cup N_G(x_2)=B$. Consequently, $N_H(A'')=B'$ and the inequality $|N_H(A'')|\geq |A''|$ is trivial.  Now, suppose that for any pair of distinct vertices $x_r, x_{r'}\in \sigma(A'')$, one has $N_G(x_r)\cap N_G(x_{r'})=\emptyset$. For each $x_r\in \sigma(A'')\setminus \{x_1\}$, one has $$a_r+1={\rm deg}_G(x_r)+1 \leq \sum_{x_i\in N_G(x_r)}{b_i},$$where the inequality follows from the fact that each vertex $x_i\in N_G(x_r)$ is adjacent to at least one of the vertices $x_1$ and $x_2$, and $x_1, x_2\neq x_r$. Moreover, since $x_n\in N_G(x_1)\cap N_G(x_2)$, one has $$a_1+1={\rm deg}_G(x_1)+1 \leq \sum_{x_i\in N_G(x_1)}{b_i}.$$
Hence, it follows from the above inequalities that
\begin{eqnarray*}
|A''| & \leq & \sum_{x_r\in \sigma(A'')}{\rm deg}_{x_r}(u_0)\leq \sum_{x_r\in \sigma(A'')}(2a_r+2) \\
&\leq& 2\sum_{x_r\in \sigma(A'')} \, \sum_{x_i\in N_G(x_r)}{b_i}\\ 
& = &2\sum_{x_i\in N_G(\sigma(A''))}{b_i}=|N_H(A'')|,
\end{eqnarray*}
where the first equality follows from the assumption that for any pair of distinct vertices $x_r, x_{r'}\in \sigma(A'')$, one has $N_G(x_r)\cap N_G(x_{r'})=\emptyset$. 

We conclude from Cases 1 and 2 above that $H$ has a perfect matching, say, $M$. For every edge $f=\{x_{st}, x_{k\ell}\}\in M$, set $\tau(f):=x_sx_k\in I(G)$. Recall from the proof of Claim 1 that $\mathcal{E}_2$  is the set of all edges $e$ of $G$ for which $e \subset V(G)\setminus (A\cup B)$. Then 
$$\prod_{f\in M}\tau(f) \prod_{e\in \mathcal{E}_2}e= u_0 \prod_{x_i \in B}x_i^{2b_i}\prod_{x_i\notin A\cup B}x_i^{f_i} \in (I(G)^\delta)_{\mathfrak{c}},$$ 
as desired.
\end{proof}

Let $\mathcal{M}$ denote the set of all monomials $v_0$ on $\{x_i : x_i\in A\}$ with  $\deg (v_0) = 2\sum_{x_i\in A}a_i$ and with ${\rm deg}_{x_i}(v_0)\leq 2a_i+2$ for each $x_i\in A$.  It follows from Claim 2 together with the argument after the proof of Claim 1 that 
$$\mathcal{B}(\mathfrak{c},G) = K[w'v_0: v_0\in \mathcal{M}].$$
Thus $\mathcal{B}(\mathfrak{c},G) \cong K[v_0: v_0\in \mathcal{M}]$. In other words, $\mathcal{B}(\mathfrak{c},G)$ is the algebra of Veronese type $A(d;\mathfrak{a})$, where $d=2\sum_{x_i\in A}a_i$ and $\mathfrak{a}=(2a_1+2, \ldots, 2a_m+2)\in \NN^m$.  Finally, \cite[Theorem 2.4]{DH} guarantees that $\mathcal{B}(\mathfrak{c},G)$ is not Gorenstein, as desired.
\end{proof}

Let, in general, $G$ be a finite graph on $V(G) = \{x_1, \ldots, x_n\}$ and suppose that $G$ is the disjoint union of $G_1$ on $V(G_1) = \{x_1, \ldots, x_m\}$ and $G_2$ on $V(G) = \{x_{m+1}, \ldots, x_n\}$.  Let $\mathfrak{c_1} = (c_1, \ldots, c_m), \mathfrak{c_2} = (c_{m+1}, \ldots, c_n)$ and $\mathfrak{c} = (c_1, \ldots, c_n)$.  It follows that
\[
\mathcal{B}(\mathfrak{c},G)=\mathcal{B}(\mathfrak{c_1},G_1) \# \mathcal{B}(\mathfrak{c_2},G_2),
\]
the Segre product of $\mathcal{B}(\mathfrak{c_1},G_2)$ and $\mathcal{B}(\mathfrak{c_2},G_2)$.  The next lemma follows from the criterion of Gorenstein rings of the Segre product \cite[Theorem 4.4.7]{GW}.

\begin{Lemma}
\label{GW}
Let $e > 1$ be an integer and let $S_i$ be the polynomial ring in $n_i$ variables over a field $K$ for each $1 \leq i \leq e$.  Then the Segre product $S_1 \# \cdots \# S_e$ is Gorenstein if and only if there is an integer $a>0$ for which $n_i \in \{1,a\}$ for every $i=1, \ldots, e$.
\end{Lemma}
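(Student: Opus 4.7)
The plan is to reduce the statement to a known Gorenstein criterion via an elementary simplification and then invoke Goto--Watanabe.

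First, I will observe that for any standard graded $K$-algebra $R=\bigoplus_{d \geq 0} R_d$, the Segre product $R \# K[t]$ with a polynomial ring in one variable is isomorphic to $R$ as a graded $K$-algebra, via the canonical identification $(R \# K[t])_d = R_d \otimes K t^d \to R_d$. Consequently, any factor $S_i$ with $n_i = 1$ may be deleted from $S_1 \# \cdots \# S_e$ without altering the isomorphism type of the Segre product; in particular, the Gorenstein property is preserved under such a deletion.

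After removing all factors with $n_i = 1$, I am left with a Segre product of polynomial rings $S_{i_1}, \ldots, S_{i_k}$ with every $n_{i_j} \geq 2$.  If $k \leq 1$, the reduced product is either $K$ or a polynomial ring, which is Gorenstein, and the condition $n_i \in \{1,a\}$ is trivially satisfied (take $a = n_{i_1}$, or any positive integer if $k = 0$).  For $k \geq 2$, I will apply \cite[Theorem 4.4.7]{GW}, which states that the Segre product of finitely many Gorenstein positively graded normal domains over $K$ is Gorenstein if and only if their $a$-invariants coincide.  Since $a(K[x_1, \ldots, x_n]) = -n$, this is equivalent to requiring $n_{i_1} = \cdots = n_{i_k}$, giving the common value $a$.

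Combining these two steps, $S_1 \# \cdots \# S_e$ is Gorenstein if and only if all the $n_i$ strictly greater than $1$ share a common value $a > 0$, equivalently $n_i \in \{1, a\}$ for every $i$.  The only subtle point, and the step I expect to require the most care, is checking that the deletion $R \# K[t] \cong R$ respects the iterated Segre product so that the reduced product still satisfies the hypotheses of the Goto--Watanabe criterion (a positively graded Gorenstein normal domain over $K$ of positive dimension); once this bookkeeping is in place, the argument is immediate.
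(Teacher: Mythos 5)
Your proposal is correct and follows essentially the same route as the paper, which derives the lemma directly from the Goto--Watanabe criterion \cite[Theorem 4.4.7]{GW}; the paper offers no further argument beyond that citation, and your reduction $R \# K[t] \cong R$ for the factors with $n_i=1$, followed by comparing $a$-invariants $a(S_i)=-n_i$ of the remaining factors, is exactly the intended bookkeeping.
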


We now classify all finite graphs $G$ on $V(G) = \{x_1, \ldots,x_n\}$ with the property that for each vector $\mathfrak{c}\in \NN^n$, the toric ring $\mathcal{B}(\mathfrak{c},G)$ is Gorenstein.

\begin{Theorem}
\label{highlight}
Let $G$ be a finite graph on $V(G) = \{x_1, \ldots,x_n\}$.  Then the toric ring $\mathcal{B}(\mathfrak{c},G)$ is Gorenstein for each vector $\mathfrak{c}\in \NN^n$ if and only if there is an integer $t\geq 3$ for which every connected component of $G$ is either $K_2$ or $K_t$.
\end{Theorem}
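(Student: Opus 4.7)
The plan is to assemble the preparatory results of this section through the Segre-product decomposition
\[
\mathcal{B}(\mathfrak{c},G) = \mathcal{B}(\mathfrak{c}_1, G_1)\# \cdots \# \mathcal{B}(\mathfrak{c}_e, G_e)
\]
recorded just before Lemma~\ref{GW}, where $G_1,\ldots,G_e$ are the connected components of $G$ and $\mathfrak{c}_i$ is the restriction of $\mathfrak{c}$ to $V(G_i)$. Since each factor $\mathcal{B}(\mathfrak{c}_i, K_{t_i})$ is a polynomial ring by Theorem~\ref{cG}, of dimension $1$ when $t_i=2$ and of dimension $1$ or $t_i$ when $t_i\geq 3$ by Lemma~\ref{dimcomp}, the Gorensteinness of $\mathcal{B}(\mathfrak{c},G)$ will be governed by the numerical criterion of Lemma~\ref{GW}.

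For the ``if'' direction, assume each connected component of $G$ is $K_2$ or $K_t$ for a fixed $t\geq 3$, and fix $\mathfrak{c}\in\NN^n$. Every factor in the Segre decomposition is a polynomial ring whose dimension lies in $\{1,t\}$: a $K_2$-component contributes the one-variable ring $K[(x_ax_b)^{\min(c_a,c_b)}]$, and a $K_t$-component contributes a polynomial ring of dimension $1$ or $t$ by Lemma~\ref{dimcomp}. Lemma~\ref{GW}, applied with $a=t$, then yields Gorensteinness of the Segre product. The degenerate cases (where $G$ is connected, or every component equals $K_2$) are handled directly by Theorem~\ref{cG}, since Lemma~\ref{GW} requires at least two factors.

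For the ``only if'' direction, assume $\mathcal{B}(\mathfrak{c},G)$ is Gorenstein for every $\mathfrak{c}\in\NN^n$. Lemma~\ref{nocomp} forces each connected component of $G$ to be a complete graph, so write $G = K_{t_1}\sqcup\cdots\sqcup K_{t_e}$. If every $t_i=2$ the claim is vacuous (pick any $t\geq 3$), so it remains to show that all $t_i\geq 3$ agree. Suppose for contradiction that $t_1\neq t_2$ with both $\geq 3$. By Lemma~\ref{exvec} I can choose $\mathfrak{c}_1,\mathfrak{c}_2$ such that $\mathcal{B}(\mathfrak{c}_j,K_{t_j})$ is the polynomial ring in exactly $t_j$ variables for $j=1,2$; for the remaining components I apply the construction in the proof of Theorem~\ref{Gc} (take $\mathfrak{c}_i$ to be the exponent vector of $\prod_{e\in E(K_{t_i})}e$) to make $\mathcal{B}(\mathfrak{c}_i,K_{t_i})$ a polynomial ring in one variable. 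The resulting Segre product has factor dimensions that include the two distinct values $t_1, t_2 > 1$, so Lemma~\ref{GW} rules out Gorensteinness, contradicting the hypothesis. Hence every component of size at least $3$ shares a common size $t$, proving the equivalence.

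I do not expect a serious obstacle: with the Segre decomposition and the lemmas of this section in hand, both directions amount to a dimension count. The most delicate points are the careful invocation of Lemma~\ref{exvec} (whose construction splits according to the parity of $t$) to produce the ``bad'' vector $\mathfrak{c}$ in the necessity direction, and the bookkeeping of the degenerate cases where Lemma~\ref{GW} does not apply and one falls back on Theorem~\ref{cG}.
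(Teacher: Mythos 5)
Your proposal is correct and follows essentially the same route as the paper: Lemma~\ref{nocomp} to force complete components, Lemma~\ref{exvec} to produce a vector witnessing non-Gorensteinness when two components of size $\geq 3$ differ, and Lemma~\ref{dimcomp} plus the Segre-product criterion of Lemma~\ref{GW} for sufficiency. The only (harmless) deviations are cosmetic: you handle the degenerate one-factor and all-$K_2$ cases explicitly, and in the necessity direction you use the Theorem~\ref{Gc} construction on the non-offending components where the paper simply applies Lemma~\ref{exvec} everywhere.
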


\begin{proof}
It follows from Lemma \ref{nocomp} that if $\mathcal{B}(\mathfrak{c},G)$ is Gorenstein for each vector $\mathfrak{c}\in \mathbb{N}^n$, then every connected component of $G$ is a complete graph. Let $G$ be the disjoint union of complete graphs $K_{n_1}, \ldots, K_{n_s}$ with each $n_i \geq 2$.  Lemma \ref{exvec} says that there is a vector $\mathfrak{c}\in \NN^n$ for which $\mathcal{B}(\mathfrak{c},G)$ is the Segre product of the polynomial rings
\[
S_{k_i} \# \cdots \# S_{k_s},
\]
where $k_i = 1$ if $n_i = 2$ and where $k_i = n_i$ if $n_i > 2$.  It then follows from Lemma \ref{GW} that if $\mathcal{B}(\mathfrak{c},G)$ is Gorenstein, then $n_i = n_j$ if $n_i > 2$ and $n_j > 2$.

Conversely, suppose that $n_i = n_j$ if $n_i > 2$ and $n_j > 2$.  If $n_i > 2$, then Lemma \ref{dimcomp} implies that for any $\mathfrak{c}\in \NN^{n_i}$, either $\mathcal{B}(\mathfrak{c},K_{n_i})$ is the polynomial ring in one variable or $\mathcal{B}(\mathfrak{c},K_{n_i})$ is the polynomial ring in $n_i$ variables.  Hence, by Lemma \ref{GW}, we deduce that for each vector $\mathfrak{c}\in \NN^n$, the toric ring $\mathcal{B}(\mathfrak{c},G)$ is Gorenstein, as desired.  
\end{proof}

\section{Classifications}
We now discuss Question \ref{question} for special classes of finite graphs. First, we consider the graphs obtained from a complete multipartite graph by deleting the edges of a (possibly empty) matching of it. 

Let $m \geq 2, n_1 \geq 1, \ldots, n_m \geq 1$ be integers and  
\[
V_i = \{x_{\sum_{j=1}^{i-1} n_j+1}, \ldots, x_{\sum_{j=1}^{i} n_j}\}, \, \, \, \, \, \, \, \, \, \, 1 \leq i \leq m.
\]
The finite graph $K_{n_1, \ldots, n_m}$ on $V(K_{n_1, \ldots, n_m}) = V_1 \sqcup \cdots \sqcup V_m$ with 
\[
E(K_{n_1, \ldots, n_m}) = \{ \{ x_k, x_\ell \} : x_k \in V_i, \, x_\ell \in V_{j}, \, 1 \leq i < j \leq m\}.
\]
is called the {\em complete multipartite graph} \cite[p.~394]{oh} of type $(n_1, \ldots, n_m)$.

By virtue of the criterion \cite[Theorem 2.4]{DH} of Gorenstein algebras of Veronese type, we can classify the vectors $\mathfrak{c}=(c_1, \ldots, c_{|V(G)|})\in\NN^{|V(G)|}$ for which $\mathcal{B}(\mathfrak{c},K_{n_1, \ldots, n_m}-M)$ is Gorenstein, where $M$ is a matching of $K_{n_1, \ldots, n_m}$.

\begin{Theorem} \label{completeAVT}
Let $m \geq 2, n_1 \geq 1, \ldots, n_m \geq 1$ be integers and $n = n_1 + \cdots + n_m$.  Let $K_{n_1, \ldots, n_m}$ be the complete multipartite graph of type $(n_1, \ldots, n_m)$.  Let $M$ be a matching of $K_{n_1, \ldots, n_m}$ such that the graph $G:= K_{n_1, \ldots, n_m} - M$ has no isolated vertex. Let $\mathfrak{c}=(c_1, \ldots, c_n)\in \NN^n$ and set
\[
\ell_i:=\sum_{x_h\in V_i}c_h, \, \, \, \, \, \, \, \, \, \, 1 \leq i \leq m.
\]
Furthermore, set 
\begin{eqnarray}
\label{ABCDEFG}
d_k:={\rm min}\{c_k, \sum_{x_{\ell}\in N_G(x_k)}c_{\ell}\}, \, \, \, \, \, \, \, \, \, \, 1 \leq k \leq n.
\end{eqnarray}
\begin{itemize}
\item[($\alpha$)] If there is $\{x_k,x_{k'}\}\in M$ for which 
\begin{eqnarray}
\label{ABCDEF}
c_k >\sum_{\substack{x_t\in N_G(x_k)\\ x_t\notin N_G(x_{k'})}} c_t, \, \, \, \, \, \, \, \, c_{k'}> \sum_{\substack{x_t\in N_G(x_{k'})\\ x_t\notin N_G(x_k)}}c_t, \, \, \, \, \, \, \, \, \, c_k+c_{k'}\geq \sum_{\substack{1\leq t\leq n\\t\neq k,k'}}c_t+2,
\end{eqnarray}
then $$\mathcal{B}(\mathfrak{c},G) \cong A(d;\mathfrak{f}),$$ where 
\[
d=\sum_{\substack{1\leq t\leq n\\t\neq k,k'}}c_t-\sum_{x_t\in N_G(x_k)\setminus N_G(x_{k'})}c_t-\sum_{x_t\in N_G(x_{k'})\setminus N_G(x_k)}c_t, 
\]
\[
\mathfrak{f}=(f_1, f_2), 
\]
with
\[
f_1:={\rm min}\Big\{c_k-\sum_{x_t\in N_G(x_k)\setminus N_G(x_{k'})}c_t, d\Big\},
\]
\[
f_2:={\rm min}\Big\{c_{k'}-\sum_{x_t\in N_G(x_{k'})\setminus N_G(x_k)}c_t, d\Big\}.
\]

\item[($\beta$)] Suppose that (\ref{ABCDEF}) fails to be satisfied for every edge of $\{x_k,x_{k'}\}\in M$.
\begin{itemize}
    \item[(i)] If$$\ell_i-2 < \sum_{\substack{1\leq \, j\leq m\\j\neq i}}\ell_j, \, \, \, \, \, \, \, \, \, \, 1 \leq i \leq m,$$then $\mathcal{B}(\mathfrak{c},G)$ is a polynomial ring in at most $n$ variables, and hence, it is Gorenstein.

    \item[(ii)] If there is $1 \leq i \leq m$ with $$\ell_i-2 \geq \sum_{\substack{1\leq \, j\leq m\\j\neq i}}\ell_j,$$
    then
   $$\mathcal{B}(\mathfrak{c},G) \cong A(d;\mathfrak{b}),$$ where 

\[
d=\sum_{\substack{1\leq \, j\leq m\\j\neq i}}\ell_j,
\]
\[
\mathfrak{b}=(d_{n_1+\cdots +n_{i-1}+1},\ldots, d_{n_1+\cdots +n_i})\in \NN^{n_{i}}. 
\]
\end{itemize}
\end{itemize}
\end{Theorem}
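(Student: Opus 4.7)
The plan is to identify $\mathcal{B}(\mathfrak{c},G)$ explicitly as an algebra of Veronese type in each of the three cases, after which the classification of Gorenstein algebras of Veronese type \cite[Theorem~2.4]{DH} furnishes the desired criterion. The basic tool is the Hall characterization: a monomial $w=x_1^{a_1}\cdots x_n^{a_n}$ with $\sum a_i=2q$ lies in $I(G)^q$ iff
\[
\sum_{x_i\in U}a_i\leq\sum_{x_j\in N_G(U)}a_j
\]
for every independent subset $U\subseteq V(G)$. For $G=K_{n_1,\ldots,n_m}-M$, a direct inspection using that $M$ is a matching shows that the only independent sets are the subsets of a single part $V_i$ and the pairs $\{x_k,x_{k'}\}$ with $\{x_k,x_{k'}\}\in M$; moreover, for such a pair one checks that $N_G(x_k)\cup N_G(x_{k'})=V(G)\setminus\{x_k,x_{k'}\}$, so its Hall constraint reads $a_k+a_{k'}\leq\sum_{t\ne k,k'}a_t$. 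This reduces the computation of $\delta_\mathfrak{c}(I(G))$ and of the exponent vectors of the minimal generators of $(I(G)^\delta)_\mathfrak{c}$ to a finite system of linear inequalities in $(a_1,\ldots,a_n)$.

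For case~$(\alpha)$, the three inequalities in~(\ref{ABCDEF}) are precisely equivalent to the statement that the pair-Hall constraint coming from $\{x_k,x_{k'}\}\in M$ is the unique binding Hall constraint at a $\mathfrak{c}$-bounded generator. From this one deduces that $a_t=c_t$ is forced for every $t\ne k,k'$, and that the remaining exponents satisfy $a_k+a_{k'}=d$, $0\leq a_k\leq f_1$, $0\leq a_{k'}\leq f_2$. For the converse, each admissible pair $(a_k,a_{k'})$ must be realized by an explicit factorization into $\delta$ edges of $G$; I would obtain this via a bipartite Marriage-theorem argument in the spirit of Claim~2 of Lemma~\ref{nocomp}. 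Together, these identify the minimal generator set of $(I(G)^\delta)_\mathfrak{c}$ with that of $A(d;(f_1,f_2))$.

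In case~$(\beta)$, the failure of~(\ref{ABCDEF}) for every matching edge makes each pair-Hall constraint non-binding, so only the part-Hall constraints $\sum_{x_h\in V_i}a_h\leq\sum_{x_h\notin V_i}a_h$ and the singleton-Hall constraints $a_h\leq\sum_{x_\ell\in N_G(x_h)}a_\ell$ remain. In~(i) the hypothesis $\ell_i-2<\sum_{j\ne i}\ell_j$ forces $2\delta\in\{\sum c_i,\sum c_i-1\}$, so every minimal generator of $(I(G)^\delta)_\mathfrak{c}$ lies in $\{u,\,u/x_1,\ldots,u/x_n\}$ with $u=x_1^{c_1}\cdots x_n^{c_n}$; these monomials are algebraically independent (cf.\ the proof of Theorem~\ref{cG}), so $\mathcal{B}(\mathfrak{c},G)$ is a polynomial ring. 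In~(ii) a unique part $V_i$ dominates in the sense that $\ell_i-2\geq\sum_{j\ne i}\ell_j$, and all part-Hall constraints outside $V_i$ are saturated, forcing $a_h=c_h$ for every $x_h\notin V_i$; the remaining exponents $(a_h)_{x_h\in V_i}$ sum to $d=\sum_{j\ne i}\ell_j$ and are bounded by $a_h\leq d_h$, where the second term in the minimum defining $d_h$ is exactly the singleton-Hall constraint evaluated at the forced values. A matching-style realization argument shows every such vector is attained, giving $\mathcal{B}(\mathfrak{c},G)\cong A(d;\mathfrak{b})$.

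The main obstacle is the bookkeeping in case~$(\alpha)$: one must simultaneously verify that, under~(\ref{ABCDEF}), the pair-Hall inequality for $\{x_k,x_{k'}\}$ is the unique binding Hall constraint (ruling out every part-Hall, every singleton-Hall, and every other pair-Hall inequality), and that every lattice point of the polytope $\{(a_k,a_{k'}):0\leq a_k\leq f_1,\ 0\leq a_{k'}\leq f_2,\ a_k+a_{k'}=d\}$ is realized by an honest edge decomposition respecting the deletion of the matching edge $\{x_k,x_{k'}\}$.
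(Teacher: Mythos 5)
Your outline targets the right objects (identify the exponent vectors of the minimal generators of $(I(G)^{\delta})_{\mathfrak{c}}$, match them with a Veronese-type algebra, and realize every admissible vector by a Marriage-theorem construction), and your inventory of the independent sets of $K_{n_1,\ldots,n_m}-M$ is correct. But the ``basic tool'' you start from --- that $x_1^{a_1}\cdots x_n^{a_n}$ of degree $2q$ lies in $I(G)^q$ if and only if $\sum_{x_i\in U}a_i\le\sum_{x_j\in N_G(U)}a_j$ for every independent set $U$ --- is not a theorem you can invoke. It is false for general graphs: for two disjoint triangles and $a=(1,\ldots,1)$, $q=3$, every Hall inequality holds, yet the monomial is not a product of three edges (an integrality/parity obstruction the incidence matrix of a non-bipartite graph permits). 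For the specific class $K_{n_1,\ldots,n_m}-M$ the sufficiency direction of this criterion is precisely the nontrivial content of the theorem; only the necessity direction comes for free. So the criterion cannot carry the weight you place on it in ``this reduces the computation \dots\ to a finite system of linear inequalities'' and ``the three inequalities in (\ref{ABCDEF}) are precisely equivalent to \dots\ the unique binding Hall constraint'': at best it tells you which exponent vectors are candidates.

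Concretely, two blocks of work remain unsupplied. First, the forced equalities $a_t=c_t$ (for $t\ne k,k'$ in case $(\alpha)$, and for $x_t\notin V_i$ in case $(\beta)$(ii)) and the value of $\delta$ do not follow from inequality bookkeeping alone; the paper obtains them by exchange arguments on an actual edge decomposition $v=e_1\cdots e_{\delta}$ (multiply by an edge $x_ix_j$ when $a_i<c_i$ and $a_j<c_j$; trade an edge $\{x_r,x_s\}$ missing $\{x_k,x_{k'}\}$ for two edges through $x_k$ and $x_{k'}$; and, in $(\alpha)$, a replacement step $v\mapsto (x_kv)/x_{k'}$ to reduce to the case $a_k<c_k$, $a_{k'}<c_{k'}$), while $(\beta)$(i) needs a further two-case analysis to exclude $2\delta\le\sum_t c_t-2$. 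Second, the realization step --- that every $\mathfrak{f}$-bounded, respectively $\mathfrak{b}$-bounded, vector of degree $d$ is attained by a product of $\delta$ edges of $G$ --- is exactly the sufficiency half of your Hall criterion in disguise; you defer it to ``in the spirit of'' remarks, but it must be written out (the paper does so via an auxiliary bipartite graph and the Marriage theorem in $(\beta)$(ii), and also needs to verify $d\le d_j$ there so that the target algebra really is $A(d;\mathfrak{b})$). Until these are supplied the proposal is a plausible plan rather than a proof.
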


\begin{proof}
Assume that $V(G)=\{x_1, \ldots, x_n\}$. Set $\delta:=\delta_{\mathfrak{c}}(I(G))$.

\medskip

($\alpha$) Suppose that there is an edge $\{x_k,x_{k'}\}\in M$ for which \ref{ABCDEF} holds. Let $v=x_1^{a_1}\cdots x_n^{a_n}=e_1\cdots e_{\delta}$ be a minimal monomial generator of $(IG)^{\delta})_{\mathfrak{c}}$, where $e_1, \ldots, e_{\delta}$ are edges of $G$. Since $\{x_k,x_{k'}\}\in M$ in the representation of $v$ as $v=e_1\cdots e_{\delta}$, every edge $e_i$ is incident to at most one of $x_k$ and $x_{k'}$. Therefore, it follows from the third inequality of \ref{ABCDEF} that$$a_k+a_{k'}\leq \sum_{\substack{1\leq t\leq n\\t\neq k,k'}}{\rm deg}_{x_t}(v)\leq \sum_{\substack{1\leq t\leq n\\t\neq k,k'}}c_t\leq c_k+c_{k'}-2.$$ So, either $a_k\leq c_k-1$ or $a_{k'}\leq c_{k'}-1$. Without loss of generality, we may assume that $a_k\leq c_k-1$. First, assume that $a_{k'}=c_{k'}$. Then the above inequalities imply that $a_k\leq c_k-2$. Also, it follows from the second inequality of \ref{ABCDEF}  that in the representation of $v$ as $v=e_1\cdots e_{\delta}$, there is an edge, say, $e_1=\{x_{k'},x_p\}$, with $x_p\in N_G(x_k)\cap N_G(x_{k'})$. Replacing $v$ by$$(x_kv)/x_{k'}=(x_kx_p)e_2 \cdots e_{\delta},$$we may assume that $a_{k'}\leq c_{k'}-1$. Thus, in the sequel, we suppose that $a_k\leq c_k-1$ and $a_{k'}\leq c_{k'}-1$. Let $x_{\ell}\notin \{x_k, x_{k'}\}$ be an a vertex of $G$ with $a_{\ell}\leq c_{\ell}-1$. It follows from the structure of $G$ that either $\{x_k, x_{\ell}\}\in E(G)$ or $\{x_{k'},x_{\ell}\}\in E(G)$. In the first case, $(x_kx_{\ell})v\in (I(G)^{\delta+1})_{\mathfrak{c}}$ and in the second case $(x_{k'}x_{\ell})v\in (I(G)^{\delta+1})_{\mathfrak{c}}$. Both contradict the definition of $\delta$. So, $a_{\ell}=c_{\ell}$, for each vertex $x_{\ell}\notin\{x_k,x_{k'}\}$. In the representation of $v$ as $v=e_1\cdots e_{\delta}$, suppose that there is an edge, say, $e_1=\{x_r,x_s\}$ which is incident to neither $x_k$ nor $x_k$. Without loss of generality, we may assume that $\{x_k,x_r\}$ and $\{x_{k'},x_s\}$ are edges of $G$. It follows that 
$$(x_kx_{k'})v=(x_kx_r)(x_{k'}x_s)e_2\cdots e_{\delta}$$ is a $\mathfrak{c}$-bounded monomial in $I(G)^{\delta+1}$, a contradiction. Thus, for each $1\leq i\leq \delta$, the edge $e_i$ is incident to either $x_k$ or $x_{k'}$. Hence,$$v=x_k^{a_k}x_{k'}^{a_{k'}}\prod_{x_t\notin\{x_k,x_{k'}\}}x_t^{c_t}$$and $\delta=\sum_{x_t\notin\{x_k,x_{k'}\}}{c_t}$. Moreover, we conclude from the above argument that$$a_k \geq \sum_{\substack{x_t\in N_G(x_k)\\ x_t\notin N_G(x_{k'})}} c_t \, \, \, \, \, \, \, \, \, \, {\rm and} \, \, \, \, \, \, \, \, \, \, a_{k'}\geq \sum_{\substack{x_t\in N_G(x_{k'})\\ x_t\notin N_G(x_k)}}c_t.$$To simplify the notation, set$$m_k := \sum_{\substack{x_t\in N_G(x_k)\\ x_t\notin N_G(x_{k'})}} c_t \, \, \, \, \, \, \, \, \, \, {\rm and} \, \, \, \, \, \, \, \, \, \, m_{k'}:= \sum_{\substack{x_t\in N_G(x_{k'})\\ x_t\notin N_G(x_k)}}c_t.$$Consequently, $v$ can be written as$$v=v'x_k^{m_k}x_{k'}^{m_{k'}}\prod_{x_t\notin\{x_k,x_{k'}\}}x_t^{c_t},$$where using the notations introduced in the statement of the theorem, $v'$ is a $\mathfrak{f}$-bounded monomial of degree $d$ on variables $x_k$ and $x_{k'}$. Moreover, it is obvious that for any such a monomial $v'$, we have $v'x_k^{m_k}x_{k'}^{m_{k'}}\prod_{x_t\notin\{x_k,x_{k'}\}}x_t^{c_t}\in \mathcal{B}(\mathfrak{c},G)$. Thus, $\mathcal{B}(\mathfrak{c},G)$ is isomorphic to $A(d;\mathfrak{f})$, as desired.

\medskip

($\beta$) (i) As above, let $v=x_1^{a_1}\cdots x_n^{a_n}=e_1\cdots e_{\delta}$ be a minimal monomial generator of $(IG)^{\delta})_{\mathfrak{c}}$, where $e_1, \ldots, e_{\delta}$ are edges of $G$. Moreover, set $u=x_1^{c_1}\cdots x_n^{c_n}$. If $2\delta=\sum_{t=1}^{n}c_t$, then $\mathcal{B}(\mathfrak{c},G)=K[u]$ is the polynomial ring in one variable. If $2\delta=\sum_{t=1}^{n}c_t-1$, then $\mathcal{B}(\mathfrak{c},G)$ is generated by a subset of $\{u/x_{1}, \ldots, u/x_{n}\}$. Since $u/x_{1}, \ldots, u/x_n$ are algebraically independent, it follows that $\mathcal{B}(\mathfrak{c},G)$ is the polynomial ring in at most $n$ variables. So assume that $2\delta\leq \sum_{t=1}^{n}c_t-2$. If there are integers $1\leq i,j\leq n$ with $\{x_i, x_j\}\in E(G)$ for which $a_i\leq c_i-1$ and $a_j\leq c_j-1$, then $v(x_ix_j)$ is a $\mathfrak{c}$-bounded monomial, a contradiction. So, we have the following cases.

\smallskip

{\bf Case 1.} Suppose that there is an edge, say, $\{x_k, x_{k'}\}\in M$ with $a_k\leq c_k-1$ and $a_{k'}\leq c_{k'}-1$. Let $x_{\ell}\notin \{x_k,x_{k'}\}$ be an arbitrary vertex of $G$ and assume that $a_{\ell}\leq c_{\ell}-1$. It follows from the structure of $G$ that either $\{x_k, x_{\ell}\}\in E(G)$ or $\{x_{k'},x_{\ell}\}\in E(G)$. In the first case, $(x_kx_{\ell})v\in (I(G)^{\delta+1})_{\mathfrak{c}}$ and in the second case $(x_{k'}x_{\ell})v\in (I(G)^{\delta+1})_{\mathfrak{c}}$. Both contradict the definition of $\delta$. So, $a_{\ell}=c_{\ell}$, for each vertex $x_{\ell}\notin\{x_k,x_{k'}\}$. In the representation of $v$ as $v=e_1\cdots e_{\delta}$, suppose that there is an edge, say, $e_1=\{x_r,x_s\}$ which is incident to neither $x_k$ nor $x_{k'}$. We may assume that $\{x_k,x_r\}, \{x_{k'},x_s\}\in E(G)$. This yields that 
$$(x_kx_{k'})v=(x_kx_r)(x_{k'}x_s)e_2\cdots e_{\delta}$$ is a $\mathfrak{c}$-bounded monomial in $I(G)^{\delta+1}$, a contradiction. Thus, for each $1\leq i\leq \delta$, the edge $e_i$ is incident to either $x_k$ or $x_{k'}$.  Since $a_{\ell}=c_{\ell}$, for each vertex $x_{\ell}\notin \{x_k, x_{k'}\}$ and $a_k\leq c_k-1$ and $a_{k'}\leq c_{k'}-1$, our argument shows that the edge $\{x_k,x_{k'}\}\in M$ satisfies \ref{ABCDEF}, which is a contradiction.

\smallskip

{\bf Case 2.} Suppose that there is an integer $i$ with $1\leq i\leq m$ such that for each vertex $x_j$ with $a_j\leq c_j-1$, one has $x_j\in V_i$ (recall that $m$ denotes the number of parts in the vertex partition of $G$). In particular, $a_t=c_t$ for each  vertex $x_t\in V(G)\setminus V_i$. Suppose that in the representation of $v$ as $v=e_1\cdots e_{\delta}$, there is an edge, say, $e_1=\{x_r,x_s\}$ which is not incident to any vertex of $V_i$. Since  $2\delta\leq \sum_{t=1}^{n}c_t-2$, we conclude that either there are distinct vertices $x_{j_1},x_{j_2}\in V_i$ with $a_{j_1}\leq c_{j_1}-1$ and $a_{j_2}\leq c_{j_2}-1$, or there is a vertex $x_{j_0}\in V_i$ with $a_{j_0}\leq c_{j_0}-2$. In the first case, by the structure of $G$, one may assume that $\{x_r,x_{j_1}\}, \{x_s,x_{j_2}\} \in E(G)$. Thus,$$(x_{j_1}x_{j_2})v=(x_{j_1}x_r)(x_{j_2}x_s)e_2\cdots e_{\delta}\in (I(G)^{\delta})_{\mathfrak{c}},$$a contradiction. In the second case, if $\{x_r,x_{j_0}\}, \{x_s,x_{j_0}\}\in E(G)$, then by the same way as above, one derives a contradiction. So, without loss of generality, assume that $\{x_r,x_{j_0}\}\notin E(G)$. This means that $\{x_r,x_{j_0}\}\in M$. Moreover, we must have $\{x_s,x_{j_0}\}\in E(G)$. Then replacing $v$ by$$v'=(x_{j_0}v)/x_r=(x_{j_0}x_s)e_2\cdots e_{\delta}$$we are reduced to case 1 and the assertion follows from the argument in that case. So, we may assume that in the representation of $v$ as $v=e_1\cdots e_{\delta}$, every edge $e_k$ is incident to a vertex in $V_i$. This implies that$$\sum_{x_t\in V_i}c_t-2\geq \sum_{x_t\in V_i}a_t=\sum_{x_t\notin V_i}a_t=\sum_{x_t\notin V_i}c_t.$$In other words,$$\ell_i-2\geq \sum_{\substack{1\leq j\leq m\\ j\neq i}}\ell_j$$which contradicts our assumption.

\medskip

($\beta$) (ii) Let $d$ and the vector ${\mathfrak{b}}$ be as defined in the statement of the theorem. First assume that$$d>d_{n_1+\cdots +n_{i-1}+1},\ldots, d_{n_1+\cdots +n_i}.$$Then it follows from the assumption and definition of $d_{n_1+\cdots +n_{i-1}+1},\ldots, d_{n_1+\cdots +n_i}$ that there is a vertex $x_r\in V_i=\{x_{n_1+\cdots +n_{i-1}+1},\ldots, x_{n_1+\cdots +n_i}\}$ and a vertex $x_{r'}\in V(G)\setminus V_i$ such that $d_r= \sum_{x_{\ell}\in N_G(x_r)}c_{\ell}< c_r$ and $\{x_r,x_{r'}\}\in M$ and $d_j=c_j$ for each $j$ with $x_j\in V_i\setminus\{x_r\}$. It follows from $d>d_{n_1+\cdots +n_{i-1}+1},\ldots, d_{n_1+\cdots +n_i}$ that$$c_{r'}>\sum_{x_j\in V_i\setminus\{x_r\}}d_j=\sum_{x_j\in V_i\setminus\{x_r\}}c_j=\sum_{\substack{x_j\in N_G(x_{r'})\\ x_t\notin N_G(x_r)}}c_j.$$This implies that \ref{ABCDEF} holds for the edge $\{x_r,x_{r'}\}\in M$, a contradiction. Therefore, $$d\leq d_{n_1+\cdots +n_{i-1}+1},\ldots, d_{n_1+\cdots +n_i}.$$

\medskip

{\bf Claim.} ${\delta}=d$ and for each $\mathfrak{b}$-bounded monomial $w$ of degree $d$ on variables$$V_i=\{x_{n_1+\cdots +n_{i-1}+1},\ldots, x_{n_1+\cdots +n_i}\},$$the monomial $w\prod_{x_t\notin V_i}x_t^{c_t}$ belongs to $\mathcal{B}(\mathfrak{c},G)$.

\medskip
\noindent
{\em Proof of the claim.}
Let $w$ be a $\mathfrak{b}$-bounded monomial of degree $d$ on the variables $V_i=\{x_{n_1+\cdots +n_{i-1}+1},\ldots, x_{n_1+\cdots +n_i}\}$. We introduce the bipartite graph $H$ defined as follows.  
The vertex set is $V(H)=A\sqcup B$, where $$A=\{x_{st}: x_s\in V_i, 1\leq t\leq {\rm deg}_{x_s}(w)\}, \, \, B=\{x_{pq}: x_p\in V(G)\setminus V_i, 1\leq q\leq c_p\}.$$
Two vertices $x_{st}\in A$ and $x_{pq}\in B$ are adjacent in $H$ if  the vertices $x_s$ and $x_p$ are adjacent in $G$. Since ${\rm deg}(w)=d$, one has $|A|=|B|$. We prove that $H$ has a perfect matching. Using Marriage Theorem \cite[Lemma 9.1.2]{HHgtm260}, we show that for each nonempty subset $A'\subseteq A$, one has $|N_H(A')|\geq |A'|$. Set$$\sigma (A'):=\{x_k: \ {\rm there \ is \ an \ integer} \ r \ {\rm with} \ 1\leq r\leq {\rm deg}_{x_k}(w) \ {\rm such \ that} \ x_{kr}\in A'\}.$$If $|\sigma(A')|\geq 2$, then the structure of $G$ implies that $N_G(\sigma(A'))=V(G)\setminus V_i$. Therefore, $N_H(A')=B$, and the inequality $|N_H(A')|\geq |A'|$ is obvious in this case. So,   suppose that $\sigma(A')=\{x_r\}$ is a singleton. If the edges of $M$ are not incident to $x_r$, then $N_G(\sigma(A'))=V(G)\setminus V_i$ and again $N_H(A')=B$. Hence, suppose that there is a vertex $x_{r'}\in V(G)\setminus V_i$ such that $\{x_r,x_{r'}\}\in M$. Then$$N_G(\sigma(A'))=V(G)\setminus (V_i\cup\{x_{r'}\})$$and 
$$|N_H(A')|=\sum_{x_t\notin V_i}c_t-c_{r'}=\sum_{x_t\in N_{G}(x_r)}c_t\geq d_r\geq {\rm deg}_{x_r}(w)\geq |A'|,$$where the first inequality follows from the definition of $d_r$. 
Thus, $H$ has a perfect matching. Let $M'$ be a perfect matching of $H$. For every edge $f=\{x_{st}, x_{pq}\} \in M'$, set $\tau(f):=x_sx_p\in I(G)$. Then $\prod_{f\in M'}\tau(f)$ is equal to $w\prod_{x_t\notin V_i}x_t^{c_t}$. This shows that $\delta=d$ and  $$w\prod_{x_t\notin V_i}x_t^{c_t}\in \mathcal{B}(\mathfrak{c},G),$$ 
and the proof of the claim is complete.

\medskip

It follows from the claim that $\mathcal{B}(\mathfrak{c},G)$ is generated by all the monomials of the form $w\prod_{x_t\notin V_i}x_t^{c_t}$ where $w$ is a $\mathfrak{b}$-bounded monomial of degree $d$ on the variables$$V_i=\{x_{n_1+\cdots +n_{i-1}+1},\ldots, x_{n_1+\cdots +n_i}\},$$In other words, $\mathcal{B}(\mathfrak{c},G) \cong A(d;\mathfrak{b})$.
This completes the proof of the theorem.
\end{proof}

\begin{Example}
Let $M =\{\{x_1,x_4\}\}$ be a matching of $K_{3,2}$. Set $G=K_{3,2}-M$  and $\mathfrak{c}=(4,6,6,4,6)$. Then $\ell_1=16$ and $\ell_2=10$. Thus, the case ($\beta$)(ii) of Theorem \ref{completeAVT} occurs. Therefore, $\mathcal{B}(\mathfrak{c},G) \cong A(10;(4,6,6))$, which is not Gorenstein by \cite[Theorem 2.4]{DH}.
\end{Example}

Let $G$ be a graph on $n$ vertices. In the rest of this paper, we consider the vector $\mathfrak{c}=(1, 1, \ldots, 1)\in \NN^n$. In this case $\delta_{\mathfrak{c}}(I(G))$ is equal to the matching number of $G$. We first mention the following simple observation.

\begin{Proposition} \label{matchbig}
Let $G$ be a graph on $n$ vertices such that  ${\rm match}(G)\geq (n-1)/2$. Then for the vector $\mathfrak{c}=(1, 1, \ldots, 1)\in \NN^n$, the toric ring  $\mathcal{B}(\mathfrak{c},G)$ is Gorenstein. 
\end{Proposition}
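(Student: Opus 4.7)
The plan is to exploit the fact that when $\mathfrak{c}=(1,\ldots,1)$, the ideal $(I(G)^{q})_{\mathfrak{c}}$ is generated by squarefree monomials of degree $2q$, each of which factors as a product of $q$ pairwise disjoint edges. Hence $\delta:=\delta_{\mathfrak{c}}(I(G))$ coincides with ${\rm match}(G)$, and the minimal monomial generators of $(I(G)^{\delta})_{\mathfrak{c}}$ are precisely the squarefree monomials $\prod_{v\in V(M)}x_v$ attached to maximum matchings $M$ of $G$. I would make this identification explicit at the outset, since everything after it is a quick bookkeeping exercise.

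Because ${\rm match}(G)\leq n/2$, the hypothesis ${\rm match}(G)\geq (n-1)/2$ leaves only two possibilities, which I would treat separately. If $n$ is even and ${\rm match}(G)=n/2$, then every maximum matching is perfect, so the only generator is $u:=x_1\cdots x_n$ and $\mathcal{B}(\mathfrak{c},G)=K[u]$ is a polynomial ring in one variable. If $n$ is odd and ${\rm match}(G)=(n-1)/2$, then every maximum matching covers all but one vertex, so each generator has the form $u/x_i$ for some $1\leq i\leq n$; the generating set is therefore a subset of $\{u/x_1,\ldots,u/x_n\}$.

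The only point deserving a sentence of justification is that, in the second case, the monomials $u/x_1,\ldots,u/x_n$ are algebraically independent over $K$: their exponent vectors are the rows of the matrix $J_n-I_n$, which is nonsingular for $n\geq 2$, so no nontrivial multiplicative relation among them exists. Any subset of an algebraically independent set is again algebraically independent, so $\mathcal{B}(\mathfrak{c},G)$ is a polynomial ring in at most $n$ variables. In either case $\mathcal{B}(\mathfrak{c},G)$ is a polynomial ring, hence Gorenstein, as desired. There is no serious obstacle in the argument; the only subtlety worth flagging is the algebraic independence step, which is really the same observation already used in the proof of Theorem \ref{cG}.
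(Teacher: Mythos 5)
Your proof is correct and follows essentially the same route as the paper: split into the cases ${\rm match}(G)=n/2$ and ${\rm match}(G)=(n-1)/2$, observe that the generators are $u$ or a subset of $\{u/x_1,\ldots,u/x_n\}$, and conclude via algebraic independence that $\mathcal{B}(\mathfrak{c},G)$ is a polynomial ring. Your explicit justification of the algebraic independence via the nonsingularity of $J_n-I_n$ is a welcome addition that the paper leaves implicit.
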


\begin{proof}
Assume that $V(G)=\{x_1, \ldots, x_n\}$ and set $u:=x_1\cdots x_n$. If ${\rm match}(G)=n/2$, then 
$\mathcal{B}(\mathfrak{c},G)=K[u]$ is the polynomial ring in one variable and is Gorenstein. If ${\rm match}(G)=(n-1)/2$, then $\mathcal{B}(\mathfrak{c},G)$ generated by a subset of $\{u/x_1, u/x_2, \ldots, u/x_n\}$. Since $u/x_1, u/x_2, \ldots, u/x_n$ are algebraically independent, it follows that $\mathcal{B}(\mathfrak{c},G)$ is the polynomial ring in at most $n$ variables and is Gorenstein.
\end{proof}

Let $G$ be a graph on $n$ vertices and consider the vector $\mathfrak{c}=(1, 1, \ldots, 1)\in \NN^n$. In view of Proposition \ref{matchbig}, it is natural to ask for a characterization of graphs $G$ with ${\rm match}(G)=(n-2)/2$ such that $\mathcal{B}(\mathfrak{c},G)$ is Gorenstein. Answering this question looks difficult. However, we can answer it when $G$ is a tree (Theorem \ref{treegor}). We need the following lemmas. 

\begin{Lemma} \label{tree2}
Each vertex of a tree $T$ with $|V(T)| \geq 2$ is contained in the vertex set of a maximum matching of $T$.
\end{Lemma}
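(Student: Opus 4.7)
The plan is a simple augmentation/exchange argument. Fix a vertex $x \in V(T)$ and let $M$ be any maximum matching of $T$; if $x$ is already covered by $M$ we are done, so assume $x \notin \bigcup_{e\in M} e$. Since $T$ is connected and $|V(T)|\geq 2$, the vertex $x$ has at least one neighbor $y$ in $T$. The goal is to produce from $M$ a new matching $M'$ of the same cardinality that does cover $x$.

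I would then distinguish two cases according to the status of $y$ in $M$. If $y$ is also uncovered by $M$, then $M \cup \{\{x,y\}\}$ is a matching of size $|M|+1$, contradicting the maximality of $M$. Hence $y$ lies in a unique edge of $M$, say $\{y,z\}$, and I set
\[
M' := (M \setminus \{\{y,z\}\}) \cup \{\{x,y\}\}.
\]
Clearly $|M'|=|M|$, so $M'$ is again a maximum matching, and $x \in \bigcup_{e\in M'}e$ by construction.

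The only thing that really needs checking is that $M'$ is in fact a matching. For this one observes that no edge of $M\setminus\{\{y,z\}\}$ contains $x$ (since $x$ is uncovered by $M$) and no such edge contains $y$ (since $M$ is a matching and $y$ already belongs to the removed edge $\{y,z\}$); thus appending $\{x,y\}$ preserves the matching property. There is no genuine obstacle here; the proof is elementary and the hypothesis that $T$ is a tree is only used through its consequences that $T$ is connected and has a neighbor of $x$, so the statement in fact holds for any connected graph on at least two vertices, but it is phrased for trees because that is the setting of its subsequent application.
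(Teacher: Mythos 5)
Your argument is correct and is essentially identical to the paper's proof: both take a maximum matching $M$, note that an uncovered $x$ has a neighbor $y$ that must be covered by $M$ (else $M$ could be augmented), and then swap the matching edge at $y$ for $\{x,y\}$. Your additional remark that only connectedness is used is accurate but does not change the substance.
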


\begin{proof}
Let $M_1$ be a maximum matching of $T$ and fix a vertex $x\in V(T)$. If $x\in V(M_1)$, then we are done.  Suppose that $x\notin V(M_1)$. Since $x$ is not an isolated vertex of $T$, it has a neighbor $y$. If $y\notin V(M_1)$, then $M_1\cup\{\{x,y\}\}$ will be a matching of $T$, which is a contradiction, since $M_1$ is a maximum matching of $T$. Thus, $y\in V(M_1)$. Hence, there is $z \in V(T)$ with $e=\{y, z\}\in M_1$. Then $M:=(M_1\setminus\{e\})\cup\{\{x, y\}\}$ is a maximum matching of $T$ with $x \in V(M)$.
\end{proof}

\begin{Lemma} \label{treeone}
Let $G$ be a forest on $n$ vertices. Suppose that ${\rm match}(G)=(n-1)/2$ and that there are two vertices $y \neq z$ of $G$ such that, for every maximum matching $M$ of $G$, one has either $V(M)=V(G)\setminus\{y\}$ or $V(M)=V(G)\setminus\{z\}$. Then $y$ and $z$ are leaves of $G$ and there is $x\in V(G)$ with $\{x, y\} \in E(G)$ and $\{x, z\} \in E(G)$.  Furthermore, $G\setminus\{x, y, z\}$ has a perfect matching.
\end{Lemma}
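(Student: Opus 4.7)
My plan is to derive the three conclusions in sequence using the following single-edge swap: given a maximum matching $M$ of $G$ with $V(M)=V(G)\setminus\{v\}$ and a neighbor $a$ of $v$, if $\{a,a'\}$ denotes the edge of $M$ covering $a$, then $M^{*}:=(M\setminus\{\{a,a'\}\})\cup\{\{v,a\}\}$ is another maximum matching with $V(M^{*})=V(G)\setminus\{a'\}$. Combined with the hypothesis, this pins down $a'\in\{y,z\}$.

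I would first verify that $y$ and $z$ lie in the same connected component: otherwise Lemma \ref{tree2} applied to the components containing $y$ and $z$, together with arbitrary maximum matchings of the remaining components, produces a maximum matching of $G$ covering both $y$ and $z$, contradicting the hypothesis. Applying Lemma \ref{tree2} in that common tree also yields maximum matchings $M$ and $M_{y}$ of $G$ with $z\in V(M)$ and $y\in V(M_{y})$; the hypothesis forces $V(M)=V(G)\setminus\{y\}$ and $V(M_{y})=V(G)\setminus\{z\}$.

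To show $y$ is a leaf, suppose $\deg_{G}(y)\geq 2$ with distinct neighbors $a,b$. Applying the swap to $M$ with $v=y$ and each of $a,b$ in turn forces $a'=z=b'$, contradicting that $M$ is a matching. Hence $\deg_{G}(y)=1$, and symmetrically $\deg_{G}(z)=1$; let $x$ and $x'$ denote the unique neighbors of $y$ and $z$ respectively. One has $x\neq z$ (otherwise $\{y,z\}$ would be the only edge of $M_{y}$ at $y$, giving $z\in V(M_{y})$ contrary to the hypothesis), and similarly $x'\neq y$. If $x\neq x'$, then $x'$ is covered in $M_{y}$ by some $\{x',w\}$ with $w\neq y$ (otherwise $\{x',y\}\in E(G)$ would force $x'=x$). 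The swap applied to $M_{y}$ with $v=z$ and neighbor $x'$ produces a maximum matching missing only $w$, so the hypothesis forces $w\in\{y,z\}$; but $w\neq y$ by construction and $w\neq z$ since $z\notin V(M_{y})$, a contradiction. Hence $x=x'$.

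Finally, removing the edge $\{y,x\}$ from $M_{y}$ yields a matching that covers exactly $V(G)\setminus\{x,y,z\}$ and uses no edge incident to $x$, $y$, or $z$, hence a perfect matching of the induced subgraph $G\setminus\{x,y,z\}$. I expect the main delicate point to be the bookkeeping in the two swap arguments: one must verify that the freshly uncovered vertex produced by the swap is genuinely forbidden to be $y$ or $z$ in Step 3 (and is forced to be $z$ in the degree argument), which hinges on the matching constraints together with the leaf conclusion established just beforehand.
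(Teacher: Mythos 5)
Your proof is correct and follows essentially the same route as the paper's: the single-edge swap that produces a new maximum matching missing the matched partner of a neighbor, which the hypothesis then forces to equal $y$ or $z$, is exactly the paper's mechanism for showing that $y$ and $z$ are leaves sharing a neighbor, and the perfect matching of $G\setminus\{x,y,z\}$ is obtained the same way by deleting the edge $\{y,x\}$. The only cosmetic difference is that you invoke Lemma~\ref{tree2} up front to manufacture maximum matchings missing $y$ and missing $z$ (and to place $y,z$ in one component), whereas the paper obtains the second such matching directly as an output of the swap.
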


\begin{proof}
Let $M_0$ be a maximum matching of $G$. Hence,  $V(G)\setminus V(M_0)$ is either $\{y\}$ or $\{z\}$. Let $V(G)\setminus V(M_0)=\{y\}$ and suppose that $N_G(y)=\{x_1, \ldots, x_k\}$. For each integer $p$ with $1\leq k\leq p$, we have $x_p\in V(M_0)$. Thus, there is an edge $e_p=\{x_p, x_{p'}\}\in M_0$. Then $M_p:=(M_0\setminus\{e_p\})\cup\{\{y,x_p\}\}$ is a maximum matching of $G$ and $V(G)\setminus V(M_p)=\{x_{p'}\}$. It follows from the hypothesis that $k=1$ and $x_{p'}=z$. Therefore, $y$ is a leaf of $G$. Moreover, $M_1=(M_0\setminus\{e_1\})\cup\{\{y,x_1\}\}$ and $V(G)\setminus V(M_1)=\{z\}$. Repeating the same process with $M_1$, we deduce that $z$ is also a leaf of $G$ and $y, z$ have the same unique neighbor $x:=x_1$. Since ${\rm match}(G)=(n-1)/2$, it follows that $G\setminus\{x, y, z\}$ has a perfect matching.
\end{proof}

A squarefree monomial ideal $I$ is called a {\em matroidal ideal} if there is a matroid $M$ on $\{x_1, \ldots, x_n\}$ such that $I$ is generated by all the monomials of the form $\prod_{x_i\in B}x_i$, where $B$ is a base of $M$. 

\begin{Lemma} \label{matr}
Let $I$ be a matroidal ideal of $S$ and set $u:=x_1\cdots x_n$. Assume that $\{v_1, \ldots, v_m\}$ is the minimal set of monomial generators of $I$. If $J$ is the monomial ideal generated by $\{u/v_1, u/v_2, \ldots, u/v_n\}$, then $J$ is a matroidal ideal.    
\end{Lemma}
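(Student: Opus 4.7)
The plan is to derive this lemma as an immediate consequence of the standard duality between a matroid and its dual matroid, translated into the language of monomial ideals. Concretely, the matroidal ideal associated with $M$ and the matroidal ideal associated with the dual matroid $M^*$ are related to each other by the operation $v_i\mapsto u/v_i$, so the lemma is essentially a reformulation of the fact that the complements of the bases of a matroid are the bases of the dual matroid.

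In detail, since $I$ is matroidal, there exists a matroid $M$ on the ground set $\{x_1,\ldots,x_n\}$ whose bases $B_1,\ldots,B_m$ correspond bijectively to the minimal monomial generators of $I$ via the identification $v_i=\prod_{x_j\in B_i}x_j$. Recall that all bases of a matroid have the same cardinality $r:=\rank(M)$, so every $v_i$ is squarefree of degree $r$, and consequently every $u/v_i=\prod_{x_j\in\{x_1,\ldots,x_n\}\setminus B_i}x_j$ is squarefree of the same degree $n-r$. In particular the generators $u/v_1,\ldots,u/v_m$ are pairwise distinct and no one divides another, so they form the minimal set of monomial generators of $J$.

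Setting $B_i^{*}:=\{x_1,\ldots,x_n\}\setminus B_i$ for $1\leq i\leq m$, the standard fact about matroid duality tells us that the family $\{B_1^{*},\ldots,B_m^{*}\}$ is precisely the collection of bases of the dual matroid $M^{*}$ on $\{x_1,\ldots,x_n\}$. Since $u/v_i=\prod_{x_j\in B_i^{*}}x_j$, we conclude that $J$ is generated by exactly those squarefree monomials whose supports are the bases of $M^{*}$. Hence $J$ is the matroidal ideal associated with $M^{*}$, as desired.

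There is no genuine obstacle in this argument: once one recognizes that the operation $v_i\mapsto u/v_i$ on minimal generators of a squarefree equigenerated ideal corresponds to the complementation operation on their supports, the lemma follows from the duality theorem for matroids. The only mild point worth checking explicitly is that all $v_i$ (equivalently, all $u/v_i$) have the same degree, which is automatic from the axioms of a matroid.
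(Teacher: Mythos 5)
Your proposal is correct and follows exactly the same route as the paper, which simply notes that $J$ is the matroidal ideal of the dual matroid of $M$; you have merely spelled out the details (equal degrees of the generators, complementation of bases) that the paper leaves implicit.
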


\begin{proof}
Suppose $I$ is the matroidal ideal generated by the squarefree monomials corresponding to the bases of a matroid $M$. Then it is well-know that $J$ is the matroidal ideal associated to the so-called dual matroid of $M$.  
\end{proof}

{\bf Notation.}
Let $T$ be a tree with ${\rm match}(T)=(|V(T)|-1)/2$.  Then $\rho(T)$ stands for the number of vertices $x$ of $T$ for which $T-x$ has a perfect matching.

\begin{Example}
    Let $T$ be the path of length $4$ on the vertices $x_1,x_2,x_3,x_4,x_5$ with the edges $\{x_i,x_{i+1}\}$ for $i=1,2,3,4$.  Then ${\rm match}(T)=2=(5-1)/2$ and $T-x_i$ has a perfect matching if and only if $i= 1,3,5$.  Thus, $\rho(T) = 3$. 
\end{Example}

We are now ready to prove the last main result of this paper.

\begin{Theorem} \label{treegor}
Let $T$ be a tree on $n\geq 2$ vertices with ${\rm match}(T)=(n-2)/2$ and $\mathfrak{c}=(1, 1, \ldots, 1)\in \NN^n$.
\begin{itemize}
\item[(i)] If $T$ has a vertex which is adjacent to three leaves, then $\mathcal{B}(\mathfrak{c},T)$ is Gorenstein.

\item[(ii)] If $T$ has two distinct vertices such that each of these two vertices is adjacent to two leaves, then $\mathcal{B}(\mathfrak{c},T)$ is Gorenstein.

\item[(iii)] Suppose that there are eight vertices $x_1, \ldots, x_8$ of $T$ for which $$\{x_1, x_2\}, \{x_2, x_3\}, \{x_3, x_4\}, \{x_4, x_5\}, \{x_5, x_6\}, \{x_6, x_7\}, \{x_4, x_8\}$$ are edges of $T$, where $x_1, x_7, x_8$ are leaves of $T$ and ${\rm deg}_T(x_3)={\rm deg}_T(x_5)=2$.  Then $\mathcal{B}(\mathfrak{c},T)$ is Gorenstein.

\item[(iv)] Suppose that there are ten vertices $x_1, \ldots, x_{10}$ of $T$ for which 
\begin{eqnarray*}
 &\{x_1,x_2\}, \{x_2,x_3\}, \{x_3,x_4\}, \{x_4,x_5\}, \{x_5,x_6\}, &\\&\{x_6,x_7\}, \{x_4,x_8\}, \{x_8,x_9\}, \{x_9,x_{10}\}&   
\end{eqnarray*}
are edges of $T$, where $x_1, x_7, x_{10}$ are leaves of $T$ and ${\rm deg}_T(x_3)={\rm deg}_T(x_5)={\rm deg}_T(x_8)=2$. Then $\mathcal{B}(\mathfrak{c},T)$ is Gorenstein.

\item[(v)] Suppose that there are six vertices $x_1, \ldots, x_6$ of $T$ for which
$$\{x_1, x_3\}, \{x_2, x_3\}, \{x_3, x_4\}, \{x_4, x_5\}, \{x_5, x_6\}$$ 
are edges of $T$, where $x_1, x_2, x_6$ are leaves of $T$ and 
${\rm deg}_T(x_4)=2$. Then $\mathcal{B}(\mathfrak{c},T)$ is Gorenstein.

\item[(vi)] Let $T_1, \ldots, T_{\ell}$ $(\ell\geq 3)$ be trees for which ${\rm match}(T_i)=(|V(T_i)-1)/2$ for each $i\in\{1,2,3\}$, and each of the trees $T_4, \ldots, T_{\ell}$ has a perfect matching. Suppose that $\rho(T_1)=\rho(T_2)$.  Let $z_1$ (resp. $z_2$) be a vertex of $T_1$ (resp. $T_2$) for which $T_1-z_1$ (resp. $T_2-z_2$) has no perfect matching. Furthermore, suppose that $z_3$ is a vertex of $T_3$ for which $T_3-z_3$ has a perfect matching.  Let $z_4, \ldots, z_{\ell}$ be arbitrary vertices of $T_4, \ldots, T_{\ell}$, respectively.  Finally define $T$ to be the tree on the vertex set $V(T_1)\cup\cdots\cup V(T_{\ell})\cup\{x\}$, where $x$ is a new vertex, and with the edge set$$E(T)=\bigcup_{i=1}^{\ell}E(T_i)\cup\big\{\{x,z_i\}: 1\leq i\leq \ell\big\}.$$Then $\mathcal{B}(\mathfrak{c},T)$ is Gorenstein.

\item[(vii)]  Let $T_1, \ldots, T_{\ell}$ $(\ell\geq 3)$ be trees for which ${\rm match}(T_i)=(|V(T_i)-1)/2$ for each $i\in\{1,2,3\}$ and each of the trees $T_4, \ldots, T_{\ell}$ has a perfect matching. Suppose that $\rho(T_1)=\rho(T_2)+\rho(T_3)$.  Let $z_1$ be a vertex of $T_1$ for which  $T_1-z_1$ has no perfect matching. Furthermore, suppose that $z_2$ (resp. $z_3$) is a vertex of $T_2$ (resp. $T_3$) for which  $T_2-z_2$ (resp. $T_3-z_3$) has a perfect matching. Let $z_4, \ldots, z_{\ell}$ be arbitrary vertices of $T_4, \ldots, T_{\ell}$, respectively. Finally define $T$ to be the tree on the vertex set $V(T_1)\cup\cdots\cup V(T_{\ell})\cup\{x\}$, where $x$ is a new vertex, and with the edge set
$$E(T)=\bigcup_{i=1}^{\ell}E(T_i)\cup\big\{\{x,z_i\}: 1\leq i\leq \ell\big\}.$$Then $\mathcal{B}(\mathfrak{c},T)$ is Gorenstein.

\item[(viii)] If $T$ does not belong to the class of trees consisting of the trees described in (i)-(vii), then $\mathcal{B}(\mathfrak{c},T)$ is not Gorenstein.
\end{itemize}
\end{Theorem}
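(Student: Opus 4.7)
The plan is to realize $\mathcal{B}(\mathbf{1}, T)$ as the edge ring of a complete multipartite graph and then invoke \cite[Remark 2.8]{oh}.

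The first step is the reduction. Since $\delta = (n-2)/2$ and $(I(T)^\delta)_{\mathbf{1}}$ is matroidal with generators $\prod_{v \in V(M)} v$ ranging over maximum matchings $M$, the underlying matroid $\mathcal{M}$ on $V(T)$ has rank $n-2$, so its dual $\mathcal{M}^*$ has rank $2$. Lemma \ref{matr} shows $\mathcal{M}^*$ is also matroidal, and a direct comparison of defining binomials (the toric relations $\sum a_i\chi_{B_i} = \sum b_j\chi_{B_j'}$ are invariant under complementation $B\mapsto V(T)\setminus B$) yields $K[\mathcal{M}] \cong K[\mathcal{M}^*]$. The bases of $\mathcal{M}^*$ are the $2$-subsets $\{y, z\} \subset V(T)$ for which $T - \{y, z\}$ admits a perfect matching, so $K[\mathcal{M}^*]$ is the edge ring $K[H_T]$ of the graph $H_T$ on $V(T)$ whose edges are these valid pairs. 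Since every rank-$2$ matroid decomposes into parallel classes plus loops, $H_T$ is a complete multipartite graph together with some isolated vertices; the latter merely tensor with a polynomial factor and do not affect Gorensteinness.

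Next, I would identify the parts of $H_T$ using the Gallai--Edmonds decomposition. Because factor-critical trees are singletons, $V(T) = D(T) \sqcup A(T) \sqcup C(T)$ with $|D(T)| = |A(T)| + 2$; the non-isolated vertices of $H_T$ coincide with $D(T)$, and two vertices $y, z \in D(T)$ lie in the same part of $H_T$ iff $T - \{y, z\}$ has no perfect matching. Lemmas \ref{tree2} and \ref{treeone} pin down these parts combinatorially: two leaves sharing a common neighbor, for instance, lie in the same part iff deleting them together with their common neighbor destroys perfect-matchability of what remains. For the recursive families (vi), (vii), attaching trees $T_1, \ldots, T_\ell$ at a hub vertex $x$ makes the parts of $H_T$ form by combining the parts coming from each $H_{T_i}$, with fusions dictated by the invariants $\rho(T_i)$ and by whether $T_i$ itself has a perfect matching.

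With this dictionary, I would apply \cite[Remark 2.8]{oh} case by case. In (i), the three leaves at a common vertex give $H_T \cong K_{1,1,1}$; in (ii), two leaf-pairs give $K_{2,2}$; in (v), one leaf-pair together with a dangling path yields $K_{1,1,2}$; the explicit small configurations (iii) and (iv) reduce similarly to specific complete multipartite graphs on Ohsugi's Gorenstein list; and the recursive families (vi), (vii) produce balanced configurations (of type $K_{a,\ldots,a}$ or of a closely related shape) precisely when the $\rho$-balance hypotheses hold, again landing on Ohsugi's list. The converse (viii) is the contrapositive: any tree with ${\rm match}(T)=(n-2)/2$ not belonging to the families (i)-(vii) should produce an $H_T$ whose part-size vector is not Gorenstein according to Ohsugi.

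The main obstacle is the exhaustive verification of (viii). After fixing the Gallai--Edmonds data $(|A(T)|, |D(T)|) = (k, k+2)$, many tree topologies remain possible, and one must systematically show that exactly the trees in families (i)-(vii) yield Gorenstein $K[H_T]$. The delicate combinatorial step is tracking how the part-sizes of $H_T$ depend additively on the $\rho$-invariants of the subtrees meeting at a common hub; the $\rho$-balance conditions in (vi), (vii) are engineered so that Ohsugi's symmetry criterion holds, and completing (viii) amounts to ruling out every other attachment pattern.
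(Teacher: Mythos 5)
Your overall strategy coincides with the paper's: both reduce $\mathcal{B}(\mathbf{1},T)$, via the complementation $u/(x_ix_j)\mapsto x_ix_j$ (your matroid-duality argument, the paper's Lemma \ref{matr} combined with \cite[Theorem 2.3]{kna}), to the edge ring of the graph whose edges are the pairs $\{y,z\}$ such that $T-\{y,z\}$ has a perfect matching, recognize that graph as complete multipartite, and then invoke \cite[Remark 2.8]{oh}. The Gallai--Edmonds framing you propose is a legitimate repackaging of what the paper does by hand (its Claim 1, the decomposition of $T-x_k$ into one odd and several even components, and the Berge--Tutte argument in Case 3 are all Gallai--Edmonds in disguise). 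So the route is not genuinely different; the question is whether you have actually walked it.

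You have not, and the gap is concrete: part (viii) is the theorem. Your proposal ends by stating that ``the main obstacle is the exhaustive verification of (viii)'' and that completing it ``amounts to ruling out every other attachment pattern'' --- but that ruling-out is precisely the content to be supplied, and it occupies the bulk of the paper's proof. The paper does it by locating a non-leaf vertex $x_k$ missed by some maximum matching (Claim 1, which already requires excluding families (i) and (ii)), forming the subtree $H_1$ and its associated multipartite graph $H=K_{r_1,\ldots,r_m}$, proving that $H$ is a \emph{proper induced} subgraph of $G$ with quantitative control on $|V(G)|-|V(H)|$ and on the adjacencies of $x_k$ (Claims 2--4), and then running a case analysis on $m\in\{2,3,4\}$ and on the part sizes $(r_1,\ldots,r_m)$ in which each surviving configuration is shown to force $T$ into exactly one of the families (iii)--(vii), using Lemma \ref{treeone} and the structure of leaves attached to $x_s$. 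None of this is present in your sketch, and it is not routine: for instance, in Subcase 2.2 one must argue that $T_1-x_s$ has at most two distinct maximum-matching vertex sets and derive from this that a specific $8$-vertex configuration (iii) is forced, and in Case 3 one must show the three odd components cannot all be detachable at their hubs (else $G$ would contain a triangle, contradicting bipartiteness). A secondary, smaller gap: even in the forward directions (iii), (iv), (vi), (vii) you do not verify that $H_T$ equals the specific graphs $K_{2,2,1}$, $K_{2,2,2}$, $K_{\rho(T_1),\rho(T_2)}$; the paper's proofs of these cases each require showing that every maximum matching of $T$ splits as a perfect matching of the complement piece together with a controlled matching of the distinguished subtree, which is where the degree hypotheses ${\rm deg}_T(x_3)={\rm deg}_T(x_5)=2$ etc.\ are actually used.
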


\begin{proof}
Set $u:=x_1\cdots x_n$. Since ${\rm match}(T)=(n-2)/2$, it follows that $\mathcal{B}(\mathfrak{c},T)$ is generated by monomials of the form $u/(x_ix_j)$ where $V(T)\setminus\{x_i,x_j\}$ is the vertex set of a maximum matching of $T$. Replacing $u/(x_ix_j)$ with $x_ix_j$, one sees that $\mathcal{B}(\mathfrak{c},T)$ is isomorphic to the toric ring generated by those squarefree monomials $x_ix_j$ for which $V(T)\setminus\{x_i, x_j\}$ is a maximum matching of $T$. In particular, $\mathcal{B}(\mathfrak{c},T)$ is isomorphic to the edge ring \cite{oh} of a finite graph $G$.

(i) Suppose that there is a vertex $x_i$ of $T$ which is adjacent to three leaves $x_p, x_q, x_r$. Since ${\rm match}(T)=(n-2)/2$, it follows that $\mathcal{B}(\mathfrak{c},T)$ is generated by the monomials $u/(x_px_q), u/(x_px_r)$ and $u/(x_qx_r)$, Thus, using the above argument, we deduce that $\mathcal{B}(\mathfrak{c},T)$ is the edge ring of the triangle, which is Gorenstein (\cite[Remark 2.8]{oh}).

(ii) Let $x_i \neq x_j$ be two vertices of $T$ and suppose that $x_i$ is adjacent to two leaves $x_{i_1}, x_{i_2}$ and that $x_j$ is adjacent to two leaves $x_{j_1},x_{j_2}$. Then $\mathcal{B}(\mathfrak{c},T)$ is generated by the monomials $u/(x_{i_1}x_{j_1}), u/(x_{i_1}x_{j_2}), u/(x_{i_2}x_{j_1})$ and $u/(x_{i_2}x_{j_2})$.  Hence, by the argument in the first paragraph of the proof, $\mathcal{B}(\mathfrak{c},T)$ is the edge ring of $K_{2,2}$, which is Gorenstein (\cite[Remark 2.8]{oh}).

(iii) Let $T_1 :=T_{\{x_1, \ldots, x_8\}}$ denote the induced subgraph of $T$ on $\{x_1, \ldots, x_8\}$. Also, set $T_2:=T - \{x_1, \ldots, x_8\}$. Let $M$ be a maximum matching of $T$. Thus, $|V(M)|=n-2$. If $x_1, x_7\in V(M)$, then, since $x_1$ and $x_7$ are leaves of $T$, we have $\{x_1,x_2\}, \{x_6,x_7\}\in M$. Hence, two of the vertices $x_3, x_5, x_8$ do not belong to $V(M)$. If $x_1\in V(M)$ and $x_7\notin V(M)$, then $\{x_1,x_2\}\in M$. Therefore, one of the vertices $x_3, x_8$ does not belong to $V(M)$. Similarly, if $x_1\notin V(M)$ and $x_7\in V(M)$, then one of the vertices $x_5, x_8$ does not belong to $V(M)$. In any case, one has $V(T)\setminus V(M)\subseteq \{x_1, \ldots,  x_8\}$. One can easily see that if there is $e \in M$ which is incident to a vertex of $T_1$ and a vertex in $T_2$, then $|V(M)|< n-2$, a contradiction. Since  $V(T)\setminus V(M)\subseteq \{x_1, \ldots, x_8\}$, we conclude that $M=M_1\cup M_2$, where $M_2$ is a perfect matching of $T_2$, and $M_1$ is a matching of $T_1$ with $|M_1| = 3$.  Thus, $\mathcal{B}(\mathfrak{c},T) \cong \mathcal{B}(\mathfrak{c'},T_1)$, where $\mathfrak{c'}=(1, \ldots, 1)\in \NN^8$. By the argument in the first paragraph of the proof, $\mathcal{B}(\mathfrak{c'},T_1)$ coincides with the toric ring of the complete multipartite graph $K_{2,2,1}$, which is Gorenstein (\cite[Remark 2.8]{oh}). 

(iv) Let $T_1 :=T_{\{x_1, \ldots, x_{10}\}}$ denote the induced subgraph of $T$ on $\{x_1, \ldots, x_{10}\}$. Also, set  $T_2:=T- \{x_1, \ldots, x_{10}\}$. Let $M$ be a maximum matching of $T$. Thus, $|V(M)|=n-2$. If $x_1, x_7\in V(M)$, then, since $x_1$ and $x_7$ are leaves of $T$, we have $\{x_1,x_2\}, \{x_6,x_7\}\in M$. Hence, two of the vertices $x_3, x_5, x_8, x_{10}$ do not belong to $V(M)$. If $x_1\in V(M)$ and $x_7\notin V(M)$, then $\{x_1,x_2\}\in M$. Therefore, one of the vertices $x_3, x_8, x_{10}$ does not belong to $V(M)$. Similarly, if $x_1\notin V(M)$ and $x_7\in V(M)$, then one of the vertices $x_5, x_8, x_{10}$ does not belong to $V(M)$. In any case, one has $V(T)\setminus V(M)\subseteq \{x_1, \ldots,  x_{10}\}$. It is easy to see that if there is $e \in M$ which is incident to a vertex of $T_1$ and a vertex in $T_2$, then $|V(M)|< n-2$, a contradiction. Since  $V(T)\setminus V(M)\subseteq \{x_1, \ldots, x_{10}\}$, we conclude that $M=M_1\cup M_2$ where $M_2$ is a perfect matching of $T_2$, and $M_1$ is a matching of $T_1$ with $|M_1| = 4$. Thus, $\mathcal{B}(\mathfrak{c},T) \cong \mathcal{B}(\mathfrak{c'},T_1)$, where $\mathfrak{c'}=(1, \ldots, 1)\in \NN^{10}$. By the argument in the first paragraph of the proof, $\mathcal{B}(\mathfrak{c'},T_1)$ coincides with the toric ring of the complete multipartite graph $K_{2,2,2}$, which is Gorenstein (\cite[Remark 2.8]{oh}).

(v) Let $T_1 :=T_{\{x_1, \ldots, x_6\}}$ denote the induced subgraph of $T$ on $\{x_1, \ldots, x_6\}$. Also, set $T_2:=T- \{x_1, \ldots, x_6\}$. Let $M$ be a maximum matching of $T$. Thus, $|V(M)|=n-2$. Since $x_1$ and $x_2$ are leaves of $T$ which have the same common neighbor $x_3$, it follows that $M$ cannot cover both $x_1$ and $x_2$.  If $x_1\in V(M)$ (resp. $x_2\in V(M)$), then $\{x_1,x_3\}\in M$ (resp. $\{x_2,x_3\}\in M$). Therefore, $M$ cannot cover both $x_4$ and $x_6$.  In any case, one has $V(T)\setminus V(M)\subseteq \{x_1, \ldots,  x_6\}$. One easily sees that if there is $e \in M$ which is adjacent to a vertex of $T_1$ and to a vertex in $T_2$, then $|V(M)|< n-2$, a contradiction. Since $V(T)\setminus V(M)\subseteq \{x_1, \ldots, x_6\}$, we conclude that $M=M_1\cup M_2$ where $M_2$ is a perfect matching of $T_2$, and $M_1$ is a matching of $T_1$ with $|M_1|=2$. Thus, $\mathcal{B}(\mathfrak{c},T) \cong \mathcal{B}(\mathfrak{c'},T_1)$, where $\mathfrak{c'}=(1, \ldots, 1)\in \NN^6$.  By the argument in the first paragraph of the proof, $\mathcal{B}(\mathfrak{c'},T_1)$ coincides with the toric ring of the complete multipartite graph $K_{2,1,1}$, which is Gorenstein (\cite[Remark 2.8]{oh}).

(vi) Every maximum matching $M$ of $T$ is of the form $$M=M_1\cup M_2\cup\cdots\cup M_{\ell}\cup\{x,z_3\},$$ where $M_1$ (resp. $M_2$) is a maximum matching of $T_1$ (resp. $T_2$), $M_3$ is a perfect matching of $T_3-z_3$ and $M_i$ is a perfect matching of $T_i$ for $4 \leq i \leq \ell$. Furthermore, $V(M)=V(T)\setminus\{x_{j_1}, x_{j_2}\}$, where $x_{j_1}$ (resp. $x_{j_2}$) can be any arbitrary vertex of $T_1$ (resp. $T_2$) for which $T_1-x_{j_1}$ (resp. $T_2-x_{j_2}$) has a perfect matching.  Again, by the argument in the first paragraph of the proof, $\mathcal{B}(\mathfrak{c},T)$ coincides with the toric ring of the complete bipartite graph $K_{\rho(T_1), \rho(T_2)}$. Since $\rho(T_1)=\rho(T_2)$, we conclude that $\mathcal{B}(\mathfrak{c},T)$ is Gorenstein (\cite[Remark 2.8]{oh}).

(vii) The proof of this part is omitted, as it is similar to the proof of (vi).

(viii) Suppose that $T$ does not belong to the class of those trees described in (i)-(vii) and that $\mathcal{B}(\mathfrak{c},T)$ is Gorenstein.

\medskip

{\bf Claim 1.} $T$ has a maximum matching $M$ for which $V(T)\setminus V(M)$ contains at least one non-leaf vertex.

\begin{proof}[Proof of Claim 1] 
Let $M_0$ be a maximum matching of $T$. If there is a non-leaf in $V(T)\setminus V(M_0)$, then we are done.  Suppose that $V(T)\setminus V(M_0)$ contains only two leaves $x_{k_1}$ and $x_{k_2}$. Let $x_{t_1} \in N_{T}(k_1)$ and $x_{t_2} \in N_{T}(x_{k_2})$.  It is possible that $x_{t_1}=x_{t_2}$. Since $x_{t_1}\in V(M_0)$, there is an edge  $e=\{x_{t_1}, x_{s_1}\}\in M_0$.  Note that $(M_0\setminus\{e\})\cup\{\{x_{t_1}, x_{k_1}\}\}$ is a matching of $T$ which does not cover  $x_{s_1}$. Hence, if $x_{s_1}$ is not a leaf of $T$, then we set $M:=M_0\setminus\{e\})\cup\{\{x_{t_1}, x_{k_1}\}\}$ and we are done. Suppose that $x_{s_1}$ is a leaf of $T$.  Similarly, one may also assume that there is a leaf $x_{s_2}$ of $T$ with $\{x_{t_2},x_{s_2}\}\in M_0$. If $x_{t_1}=x_{t_2}$, then it is adjacent to three leaves $x_{k_1}, x_{k_2}$ and $x_{s_1}$.  It follows that $T$ is a tree as described in (i), a contradiction. If $x_{t_1} \neq x_{t_2}$, then each $x_{t_i}$ is adjacent to two leaves $x_{k_i}$ and $x_{s_i}$.  Therefore, $T$ is a tree as described in (ii), a contradiction. 
\end{proof}

Let $M$ be a maximum matching of $T$ as described in Claim 1 and $x_k$ a non-leaf vertex in $V(T)\setminus V(M)$. Assume that $T_1, \ldots, T_c$ are the connected components of $T-x_{k}$. Then $c\geq 2$, as $x_k$ is not a leaf of $T$. For $1 \leq i \leq c$, let $H_i:= T_{V(T_i)\cup\{x_k\}}$ denote the induced subgraph of $T$ on $V(T_i)\cup\{x_k\}$. Since $x_k\notin V(M)$, one has ${\rm match}(T-x_k)=\frac{|V(T-x_k)|-1}{2}$. So, there is a connected component, say, $T_1$ of $T-x_k$ with ${\rm match}(T_1)=\frac{|V(T_1)|-1}{2}$ and ${\rm match}(T_i)=\frac{|V(T_i)|}{2}$ for $2 \leq i \leq c$.  In other words, each of the trees $T_2, \ldots, T_c$ has a perfect matching. Since ${\rm match}(T)=(n-2)/2$ and $x_k\notin V(M)$, it follows that ${\rm match}(H_1)=\frac{|V(H_1)|-2}{2}$. 

Consider the vector $\mathfrak{c}_1=(1, \ldots, 1)\in \NN^{|V(H_1)|}$. We know from \cite[Theorem 4.3]{HSF} (essentially, from \cite[Theorem 1 on page 246]{w}) that $(I(H_1)^{{\rm match}(H_1)})_{\mathfrak{c}_1}$ is a matroidal ideal. Thus, we conclude from Lemma \ref{matr}, 
\cite[Theorem 2.3]{kna} and the argument at the beginning of the proof that $\mathcal{B}(\mathfrak{c}_1,H_1)$ coincides with the toric edge ring of a complete multipartite graph $K_{r_1, \ldots, r_m}$ with $m\geq 2$.  Set $H:=K_{r_1, \ldots, r_m}$. Since $H_1$ has a maximum matching which does not cover $x_k$, we have $x_k\in V(H)$.

Recall the graph $G$ in the first paragraph of the proof. We now prove the following claims.

\medskip

{\bf Claim 2.} $H$ is a proper induced subgraph of $G$.

\begin{proof}[Proof of Claim 2] 
We first show that $H$ is a subgraph of $G$. Let $\{x_i,x_j\}\in E(H)$. This means that $H_1$ has a maximum matching $M_1$ with $V(H_1)\setminus V(M_1)=\{x_i, x_j\}$.  For each $2 \leq i \leq c$, consider a perfect matching $M_i$ of $T_i$. Then $M_1\cup M_2\cup \cdots \cup M_c$ is a maximum matching of $T$ which covers neither $x_i$ nor $x_j$. Hence, $\{x_i, x_j\}\in E(G)$. This implies that $H$ is a subgraph of $G$. We now show that $H$ is an induced subgraph of $G$. Let $x_{i'}$ and $x_{j'}$ be vertices of $H$ with $\{x_{i'}, x_{j'}\}\in E(G)$ and $M'$ a maximum matching of $T$ which covers neither $x_{i'}$ nor $x_{j'}$. Since $x_{i'}, x_{j'}\in V(H_1)$ and since $T_2, \ldots, T_c$ have perfect matchings, it follows that for an edge $e \in M'$, if $x_k \in e$, then $e$  is not incident to any vertex in $V(T_2) \cup \cdots \cup V(T_c)$ (where $x_k$ is the vertex introduced just after the proof of Claim 1). Thus, $M'\cap E(H_1)$ is a maximum matching of $H_1$ which covers neither $x_{i'}$ nor $x_{j'}$. Therefore, $\{x_{i'}, x_{j'}\}\in E(H)$ which proves that $H$ is an induced subgraph of $G$. Finally, we show that $V(H)$ is a proper subset of $V(G)$. Let $M_1'$ be a maximum matching of $T_1$ and $M_2'$ a maximum matching of $H_2$ which covers $x_k$ (the existence of $M_2'$ is guaranteed by Lemma \ref{tree2}). As above, for each $3 \leq i \leq c$, consider a perfect matching $M_i$ of $T_i$. Set $M'':=M_1'\cup M_2'\cup M_3\cup \cdots \cup M_c$, which is a maximum matching of $T$ and, in addition, there is a vertex of $T_2$ which is not covered by  $M''$. This means that a vertex of $T_2$ is contained in $V(G)\setminus V(H)$. Hence, $V(H)$ is a proper subset of $V(G)$, as desired.
\end{proof}

\medskip

{\bf Claim 3.} $|V(G)|-|V(H)|\geq c-1$ and $\{x_p, x_k\} \notin E(G)$ for each $x_p\in V(G)\setminus V(H)$.

\begin{proof}[Proof of Claim 3]
Let $M_1$ be a maximum matching of $T_1$ and $M_i$ a perfect matching of $T_i$ for $2 \leq i \leq c$.  For each $2 \leq i \leq c$, let $M_i'$ be a maximum matching of $H_i$ with $x_k \in V(M'_i)$ (the existence of $M_i'$ is guaranteed by Lemma \ref{tree2}). For each $i=2, \ldots, c$, there is a vertex $x_{p_i}\in V(T_i) \setminus V(M_i')$.  Then 
$$M_1\cup M_2 \cup\cdots\cup M_{i-1}\cup M_i'\cup M_{i+1}\cup\cdots\cup M_c$$ 
is a maximum matching of $T$ which does not cover $x_{p_i}$. Thus $|V(G)|-|V(H)|\geq c-1$, as desired.  

Now to prove the second part, let $x_p\in V(G)\setminus V(H)$ with $\{x_p,x_k\}\in E(G)$. This means that there is a maximum matching $M_0$ of $T$ with $V(T)\setminus V(M_0)=\{x_p,x_k\}$. Recall that $T_1$ is an odd component ({\em i.e.}, $|V(T_1)|$ is odd) of $T-x_k$ and each of $T_2, \ldots, T_c$ is an even components of $T-x_k$. Since $x_k\notin V(M_0)$, it follows that $M_0\cap (E(T_2)\cup\cdots\cup E(T_c))$ is a perfect matching of $T_2\cup\cdots\cup T_c$. In particular, $x_p\in V(T_1)$ and $M_0\cap E(H_1)$ is a maximum matching of $H_1$ which covers neither $x_p$ nor $x_k$. Consequently, $x_p\in V(H)$, a contradiction.
\end{proof}

{\bf Claim 4.} Let $x_t \in V(T_2)$ with $\{x_t, x_k\}\in E(T)$. If no leaf of $T_2$ is adjacent to $x_t$, then $|V(G)|-|V(H)|\geq c$.

\begin{proof}[Proof of Claim 4]
By the same argument as in the proof of Claim 3, it is enough to show that there are two vertices $x_{q_1}, x_{q_2}\in V(T_2)$, and two maximum matchings $M'$ and $M''$ of $H_2$ with $V(M')=V(H_2)\setminus\{x_{q_1}\}$ and $V(M'')=V(H_2)\setminus\{x_{q_2}\}$. The existence of $x_{q_1}$ (and $M'$) follows from the proof of Claim 3. By hypothesis, there is a vertex  $x_r \in N_{T_2}(x_{q_1})$  with $\{x_r, x_k\} \notin E(T)$. If $x_r\notin V(M')$, then $M'\cup \{\{x_r, x_{q_1}\}\}$ will be a matching of $H_2$ which is a contradiction, as $M'$ is a maximum matching of $H_2$. Thus, $x_r\in V(M')$, and so, there is an edge $e=\{x_r, x_{q_2}\}\in M'$. Note that $x_{q_2}\neq x_k$ and hence, $x_{q_2}\in V(T_2)$. Then $M''=(M'\setminus\{e\})\cup\{\{x_r,x_{q_1}\}\}$ is a maximum matching of $H_2$ with $V(M'')=V(H_2)\setminus\{x_{q_2}\}$.
\end{proof}

Recall that $H=K_{r_1, \ldots, r_m}$ with $m\geq 2$. Since $\mathcal{B}(\mathfrak{c},T)$ is Gorenstein, it follows from \cite[Remark 2.8]{oh} and Claim 2 that $m\leq 4$. We proceed our proof with dividing the situation into the following cases.

\medskip

{\bf Case 1.} Let $m=4$. Since by Claim 2, $H$ is a proper subgraph of $G$, using \cite[Remark 2.8]{oh}, we deduce that $\mathcal{B}(\mathfrak{c},T)$ is not Gorenstein, a contradiction.

\medskip

{\bf Case 2.} Let $m=3$. Suppose $V(H)=V_1\sqcup V_2\sqcup V_3$ with $|V_i|=r_i$ for $i=1, 2, 3$. By Claim 2, $H$ is an induced subgraph of $G$. Since $\mathcal{B}(\mathfrak{c},T)$ is Gorenstein, it follows from \cite[Remark 2.8]{oh} that $r_1, r_2, r_3\leq 2$. As we mentioned before Claim 2, $x_k$ is a vertex of $H$. Without loss of generality, we may assume that $x_k\in V_1$.

\smallskip

{\bf Subcase 2.1.} Let $r_1=2$. By Claim 3, there is $x_p\in V(G)\setminus V(H)$ for which $\{x_p,x_k\}\notin E(G)$. Then, in the partition of $V(G)$, the part containing $x_k$ has cardinality at least $3$. Thus, by using \cite[Remark 2.8]{oh}, $\mathcal{B}(\mathfrak{c},T)$ is not Gorenstein.

\smallskip

{\bf Subcase 2.2.} Let $r_1=r_2=1$ and $r_3=2$. Let $x_s$ denote the unique neighbor of $x_k$ in $H_1$. Moreover, assume that $V_2=\{y\}$ and $V_3=\{v,w\}$ with $y,v,w\in V(H_1)$. Every maximum matching $M_1$ of $H_1$ with $x_k\in V(M_1)$ contains the edge $\{x_s, x_k\}$ and its vertex set is either $V(H_1)\setminus\{y, v\}$ or $V(H_1)\setminus\{y, w\}$. In other words, there are only two possibilities for the vertex set of a maximum matching $M_1$ of $H_1$ with $x_k \in V(M_1)$.  Equivalently, there are two possibilities for the vertex set of a maximum matching of the graph $T_1-x_s$, as $N_{H_1}(x_k)=\{x_s\}$. Note that $T_1-x_s$ has no perfect matching, as otherwise $H_1$ has a perfect matching. Let $M_2$ be a maximum matching of $T_1-x_s$. Choose two vertices $x_{r_1}, x_{r_2}\in V(T_1-x_s)\setminus V(M_2)$. Suppose that $x_{r_1}$ and $x_{r_2}$ are not isolated vertices of $T_1-x_s$. Let $x_{q_1} \in N_{T_1-x_s}(x_{r_1})$ and $x_{q_2} \in N_{T_1-\{x_s\}}(x_{r_2})$.  It is possible that $x_{q_1}=x_{q_2}$.  For $i\in \{1,2\}$, there is an edge $e_i\in M_2$ which is incident to $x_{q_i}$.  Again it is possible that $e_1=e_2$.  Let $e_i=\{x_{q_i},x_{p_i}\}$.  Then $M_3:=(M_2\setminus \{e_1\})\cup\{\{x_{r_1},x_{q_1}\}\}$ and $M_4:=(M_2\setminus \{e_2\})\cup\{\{x_{r_2},x_{q_2}\}\}$ are maximum matchings of $T_1-x_s$. Hence, $T_1-x_s$ has at least three maximum matchings $M_2, M_3$ and $M_4$ with $V(M_i) \neq V(M_j)$ for $2\leq i,j\leq 4$ with $i \neq j$, a contradiction. This contradiction shows that at least one of the vertices $x_{r_1}$ and $x_{r_2}$ is an isolated vertex of $T_1-\{x_s\}$.  Suppose that $x_{r_1}$ is an isolated vertex of $T_1-x_s$. Since $T_1$ is connected, we conclude that $\{x_{r_1},x_s\} \in E(T_1)$. Our goal is to show that $T$ is a tree as described in (iii). Since there are two possibilities for the vertex sets of maximum matchings of $T_1-x_s$, it follows that there are two possibilities for the vertex sets of maximum matchings of $T_1-\{x_s, x_{r_1}\}$, as $x_{r_1}$ is an isolated vertex of $T_1-x_s$. Since $|V(T_1)|$ is odd, we deduce that $|V(T_1-\{x_s, x_{r_1}\})|$ is odd. If ${\rm match}(T_1-\{x_s, x_{r_1}\})\leq (|V(T_1-\{x_s, x_{r_1}\}|-3)/2$, then, since $x_{r_1}$ is a leaf of $T_1$ with $x_s \in N_{T_1}(x_{r_1})$, one has 
$${\rm match}(T_1)={\rm match}(T_1-\{x_s, x_{r_1}\})+1\leq \frac{(|V(T_1)|-3)}{2},$$ a contradiction.  Hence, $${\rm match}(T_1-\{x_s, x_{r_1}\})=(|V(T_1-\{x_s, x_{r_1}\}|-1)/2.$$ Therefore, by Lemma \ref{treeone}, $T_1-\{x_s, x_{r_1}\}$ has two leaves $x_1$ and $x_2$ with $x_3 \in N_{T_1-\{x_s, x_{r_1}\}}(x_1)$ and $x_3 \in N_{T_1-\{x_s, x_{r_1}\}}(x_2)$. If $\{x_s,x_1\}, \{x_s,x_2\}\notin E(T_1)$, then in $H_1$ there are two vertices $x_s$ and $x_3$, each of which is adjacent to two leaves. In fact, $x_s$ is adjacent to $x_k, x_{r_1}$ and $x_3$ is adjacent to $x_1, x_2$. Thus, the same argument as in the proof of (ii) says that $H$ is the complete bipartite graph  $K_{2,2}$, which is a contradiction, as $H=K_{1,1,2}$. Thus, $x_s$ is adjacent to at least one of $x_1$ and $x_2$. Furthermore, since $T$ has no cycle, $x_s$ is not adjacent to both of the vertices $x_1, x_2$. Suppose that $\{x_s,x_1\}\in E(T)$. Consequently, ${\rm deg}_T(x_1)=2$ and ${\rm deg}_T(x_2)=1$. By Claim 2, $H$ is a proper subgraph of $G$, and by Claim 3, for every $x_p\in V(G)\setminus V(H)$, one has $\{x_p,x_k\}\notin E(G)$. Since $\mathcal{B}(\mathfrak{c},T)$ is Gorenstein and $H=K_{1, 1, 2}$, it follows from \cite[Remark 2.8]{oh} that $G=K_{2,1,2}$. In particular, by Claim 3, one has $c=2$. Since $T$ is a tree, there is exactly one vertex $x_t\in V(T_2)$ with $\{x_k,x_t\}\in E(T)$. In particular, ${\rm deg}_T(x_k)=2$. By Claim 4, there is a leaf $x_{t'}$ of $T_2$ with $\{x_t,x_t'\}\in E(T)$. Thus, $T$ is a tree as described in (iii).

\smallskip

{\bf Subcase 2.3.}  Let $r_1=r_3=1$ and $r_2=2$. Then by a similar argument as in Subcase 2.2 (or by symmetry), a contradiction arises.

\smallskip

{\bf Subcase 2.4.} Let $r_1=1$ and $r_2=r_3=2$.  Let $x_s$ denote the unique neighbor of $x_k$ in $H_1$. Moreover, assume that $V_2=\{y, z\}$ and $V_3=\{v,w\}$.  Let $M_1$ be a maximum matching of $H_1$ with $V(M_1)=V(H_1)\setminus\{y,v\}$ and $M_2$ a maximum matching of $H_1$ with $V(M_2)=V(H_1)\setminus\{z,w\}$.  Since $x_k$ is a leaf of $H_1$ which is covered by $M_2$, we deduce that $\{x_k,x_s\}\in M_2$. Therefore, $z\neq x_s$ and $w\neq x_s$. Similarly, $\{x_k,x_s\}\in M_1$ and $y \neq x_s$ and $v\neq x_s$. Furthermore, as $y\in V(M_2)$, we deduce that $y$ is not an isolated vertex of $T_1-x_s$. Let $N_{T_1-x_s}(y)=\{x_{p_1}, \ldots x_{p_{\ell}}\}$. Assume that for some $i$ with $1\leq i\leq \ell$, we have $x_{p_i}\notin V(M_1)$. Then $M_1\cup\{\{y,x_{p_i}\}\}$ is a matching of $H_1$ which is a contradiction, as $M_1$ is a maximum matching of $H_1$. This contradiction shows that each vertex $x_{p_i}$ is covered by $M_1$. Hence, for each $i=1, \ldots, \ell$, there is an edge $e_i=\{x_{p_i},x_{q_i}\}\in M_1$. Since $x_{p_i}\neq x_s$ and $\{x_k,x_s\}\in M_1$, we have $x_{q_i}\in V(T_1-x_s)$. Then $M_{p_i}=(M_1\setminus\{e_i\})\cup\{\{y,x_{p_i}\}\}$ is a maximum matching of $H_1$ and $V(M_{p_i})=V(H_1)\setminus \{x_{q_i},v\}$. Since $H=K_{1,2,2}$ with $V_2=\{y, z\}$ and $V_3=\{v,w\}$, one has $\ell=1$ and $x_{q_1}=z$. In other words, $y$ is a leaf of $T_1-x_s$ and its unique neighbor $x_{p_1}$ is a neighbor of $z$. Similarly, $z$ is a leaf of $T_1-x_s$. To simplify the notation, set $x_1:=y, x_2:=x_{p_1}$ and $x_3:=z$. Thus, $x_1$ and $x_3$ are leaves of $T_1-x_s$ and $x_2$ is the unique neighbor of both of them. By the same argument, $v$ and $w$ are leaves of $T_1-x_s$ and they have the same unique neighbor. Set $x_5:=v, x_7:=w$ and let $x_6$ denote the unique (common) neighbor of $v$ and $w$. Since $T$ is not a tree as described in (i), we have $x_6\neq x_2$.  Set $x_4:=x_s$. Our goal is to show that $T$ is a tree as described in (iv). If $\{x_1,x_4\}, \{x_3, x_4\}\notin E(T_1)$, then $x_1$ and $x_3$ are leaves of $H_1$ and, since they have a common unique neighbor, it follows that every maximum matching of $H_1$ does not cover either $x_1$ or $x_3$. On the other hand, since $H=K_{1,2,2}$, $V_1=\{x_k\}$ and $V_3=\{v,w\}=\{x_5, x_7\}$, it follows that $H_1$ has a maximum matching which covers neither $x_k$ nor $x_5$, thus covers both $x_1, x_3$, which is a contradiction. Hence, we have either $\{x_1,x_4\}\in E(T_1)$ or $\{x_3,x_4\}\in E(T_1)$. Suppose that $\{x_3, x_4\}\in E(T_1)$.   Since $T$ has no cycle, $x_4$ is not adjacent to both of $x_1$ and $x_3$.  By the same argument, we may assume that $\{x_4,x_5\}\in E(T_1)$ and $\{x_4, x_7\}\notin E(T_1)$. In particular,  ${\rm deg}_T(x_1)={\rm deg}_T(x_7)=1$ and ${\rm deg}_T(x_3)={\rm deg}_T(x_5)=2$. Set $x_8:=x_k$. So, $\{x_4,x_8\}=\{x_s, x_k\}\in E(T)$. By Claim 2, $H$ is a proper induced subgraph of $G$ and, in addition, by Claim 3, for every $x_p\in V(G)\setminus V(H)$, one has $\{x_p, x_8\}=\{x_p, x_k\}\notin E(G)$. Since $\mathcal{B}(\mathfrak{c},T)$ is Gorenstein and $H=K_{1, 2, 2}$, it follows from \cite[Remark 2.8]{oh} that $G=K_{2,2,2}$. In particular, by Claim 3, one has $c=2$. Since $T$ is a tree, there is exactly one vertex, say, $x_9\in E(T_2)$ with $\{x_8,x_9\}=\{x_k,x_9\}\in E(T)$. In particular, ${\rm deg}_T(x_8)=2$. By Claim 4, there is a leaf, say, $x_{10}$ of $T_2$ with $\{x_9,x_{10}\}\in E(T)$. Thus, $T$ is a tree as described in (iv).

\smallskip

{\bf Subcase 2.5.} Let $r_1=r_2=r_3=1$. Then $V_1=\{x_k\}$. Suppose $V_2=\{y\}$ and $V_3=\{z\}$. Let $x_s$ denote the unique neighbor of $x_k$ in $H_1$. Also, let $M_1$ be a maximum matching of $H_1$ with $V(M_1)=V(H_1)\setminus\{y,z\}$. Since $M_1$ covers $x_k$ and $x_k$ is a leaf of $H_1$, we deduce that  $\{x_k,x_s\}\in M_1$. In particular, $y \neq x_s$ and $z\neq x_s$. Suppose that $N_{T_1-x_s}(y)=\{x_{p_1}, \ldots x_{p_{\ell}}\}$. Assume that for some $i$ with $1\leq i\leq \ell$, we have $x_{p_i}\notin V(M_1)$. Then $M_1\cup\{\{y,x_{p_i}\}\}$ is a matching of $H_1$ which is a contradiction, as $M_1$ is a maximum matching of $H_1$. This contradiction shows that each vertex $x_{p_i}$ is covered by $M_1$. Hence, there is  an edge $e_i=\{x_{p_i}x_{q_i}\}\in M_1$. Since $x_{p_i}\neq x_s$ and $\{x_k,x_s\}\in M_1$, we conclude that  $x_{q_i}\neq x_k$. Note that $M_{p_i}=(M_1\setminus\{e_i\})\cup\{\{y,x_{p_i}\}\}$ is a maximum matching of $H_1$ and $V(M_{p_i})=V(H_1)\setminus \{x_{q_i},z\}$. The existence of this maximum matching is a contradiction, as $H=K_{1,1,1}$ with $V_2=\{y\}$ and $V_3=\{z\}$.  Hence, $\ell=0$. In other words, $y$ is an isolated vertex of $T_1-x_s$. Since $T_1$ is a tree (in particular, connected), $y$ is a leaf of $T_1$ and $\{x_s,y\}\in E(T_1)$. By a similar argument, $z$ is a leaf of $T_1$ and $\{x_s,z\}\in E(T_1)$. To simplify the notation, set $x_1:=y, x_2:=z$ and $x_3:=x_s$. Therefore, ${\rm deg}_T(x_1)={\rm deg}_T(x_2)=1$. Our goal is to show that $T$ is a tree as described in (v). Set $x_4:=x_k$. Thus, $\{x_3,x_4\}=\{x_s,x_k\}\in E(T)$. By Claim 2, $H$ is a proper induced subgraph of $G$, and by Claim 3, for every $x_p\in V(G)\setminus V(H)$, one has $\{x_p,x_4\}=\{x_p, x_k\}\notin E(G)$. Since $\mathcal{B}(\mathfrak{c},T)$ is Gorenstein and $H=K_{1, 1, 1}$, it follows from \cite[Remark 2.8]{oh} that $G=K_{2,1,1}$. In particular, by Claim 3, one has $c=2$. Since $T$ is a tree, there is exactly one vertex, say, $x_5\in E(T_2)$ with $\{x_4,x_5\}=\{x_k,x_5\}\in E(T)$. In particular, ${\rm deg}_T(x_4)=2$. By Claim 4, there is a leaf, say, $x_6$ of $T_2$ with $\{x_5,x_6\}\in E(T)$. Hence, $T$ is a tree as described in (v).

\medskip

{\bf Case 3.} Let $m=2$.  Then $H=K_{r_1, r_2}$, where $r_1, r_2 >0$ are integers.  Since $\mathcal{B}(\mathfrak{c},T)$ is Gorenstein, it follows from \cite[Remark 2.8]{oh} and Claim 3 that $G$ is a complete bipartite graph. Suppose that $G=K_{s,t}$ and $V(G)=V_1\sqcup V_2$ with $|V_1|=s$ and $|V_2|=t$. We first show that $s,t\geq 2$.  Let $s=1$ and $V_1=\{y\}$. Then every maximum matching $M$ of $T$ does not cover $y$. This contradicts Lemma \ref{tree2}. Thus, $s,t\geq 2$. Hence, \cite[Remark 2.8]{oh} implies that $s=t$ and $G=K_{s,s}$. 

Since the number of vertices of $T$ is even and $T$ has no perfect matching, it follows from \cite[Exercise 5.3.3]{bm} that there is a vertex $x_1\in V(T)$ for which the number $k$ of odd connected components $T-x_1$ is at least $2$. However, $k$ cannot be even, as $|V(T-x_1)|$ is odd. Thus, $k\geq 3$. On the other hand, \cite[Exercise 5.3.4]{bm} implies that $k\leq 3$. Consequently, $k=3$. Let $L_1, L_2, \ldots, L_{\ell}$ with $\ell \geq 3$ denote the connected components of $T-x_1$, where $L_1, L_2, L_3$ are odd connected components of $T-x_1$ and $L_4, \ldots, L_{\ell}$ are even connected components of $T-x_1$. Since ${\rm match}(T)=(n-2)/2$, every maximum matching of $T$ contains an edge $e$ which is incident to $x_1$ as well as to a vertex in $V(L_1)\cup V(L_2)\cup V(L_3)$. In particular, each of $L_4, \ldots, L_{\ell}$ has a perfect matching and ${\rm match}(L_i)=(|V(L_i)|-1)/2$, for $i=1,2,3$. Furthermore, for every maximum matching $M$ of $T$, one has $V(M)=V(T)\setminus\{y, z\}$, where $y, z$ are vertices of distinct odd components of $T-x_1$. Let $M_1, M_2, \ldots, M_{\ell}$ be maximum matchings of $L_1, L_2, \ldots, L_{\ell}$, respectively. Thus, for each $i=1, 2, 3$, there is a vertex $y_i\in V(L_i)$ with $V(M_i)=V(T_i)\setminus \{y_i\}$. Let $z_1, z_2, z_3$ denote the unique neighbor of $x_1$ in $L_1, L_2, L_3$, respectively. Suppose that $L_1-z_1, L_2-z_2, L_3-z_3$ have perfect matchings, say, $M_1', M_2', M_3'$. Then for each pair of distinct integers $i, j\in \{1, 2, 3\}$, 
$$M_{ij}=M_i\cup M_j\cup M_h'\cup M_4\cup\cdots\cup M_{\ell}\cup\{\{x_1,z_h\}\}$$
is a maximum matching of $T$, where $h$ is the unique integer in $\{1, 2, 3\}\setminus\{i, j\}$. One has $V(M_{ij})=V(T)\setminus\{y_i, y_j\}$. Hence, $\{y_1,y_2\}, \{y_1,y_3\}, \{y_2,y_3\}\in E(G)$. This is a contradiction, as $G=K_{s,s}$ is a bipartite graph. This contradiction shows that at least one of the graphs $L_1-z_1, L_2-z_2, L_3-z_3$ has no perfect matching.  Without loss of generality, we may assume that $L_1-z_1$ has no perfect matching. If both $L_2-z_2$ and $L_3-z_3$ have no perfect matching, then each  maximum matching of $T$ does not cover at least one vertex in each of $L_1, L_2, L_3$, which contradicts ${\rm match}(T)=(n-2)/2$. Hence, either $L_2-z_2$ or $L_3-z_3$ has a perfect matching. We assume without loss of generality that $L_3-z_3$ has a perfect matching $M_3'$.

\smallskip

{\bf Subcase 3.1.} Suppose that $L_2-z_2$ has no perfect matching. For each maximum matching $M$ of $T$, one has $V(M)=V(T)\setminus \{v, w\}$, where $v \in V(L_1)$ for which $L_1-v$ has a perfect matching and $w \in V(L_2)$ for which $L_2-w$ has a perfect matching. Moreover, for such vertices $v$ and $w$, 
$$M_1''\cup M''_2\cup M_3'\cup M_4\cup\cdots\cup M_{\ell}\cup\{\{x_1, z_3\}\}$$
is a maximum matching of $T$ which covers neither $v$ nor $w$. Here, $M_1''$ is a perfect matching of $L_1-v$ and $M_2''$ is a perfect matching of $L_2-w$. Thus, $\{v, w\}\in E(G)$. Since $G=K_{s,s}$, one has $\rho(T_1)=s=\rho(T_2)$. Therefore, $T$ is a tree as described in (vi).

\smallskip

{\bf Subcase 3.2.} Suppose that $L_2-z_2$ has a perfect matching. For every maximum matching $M$ of $T$, one has $V(M)=V(T)\setminus \{v, w\}$, where $v \in V(L_1)$ for which $L_1-v$ has a perfect matching and $w \in V(L_j)$ with $j\in \{2, 3\}$ for which $L_j-w$ has a perfect matching. By a similar argument as in Subcase 3.1, for such vertices $v$ and $w$, one has $\{v, w\}\in E(G)$. Since $G=K_{s,s}$, one has $\rho(T_1)=s=\rho(T_2)+\rho(T_3)$. Thus, $T$ is a tree as described in (vii).
\end{proof}

\section*{Acknowledgments}
The second author is supported by a FAPA grant from Universidad de los Andes.

\section*{Statements and Declarations}
The authors have no Conflict of interest to declare that are relevant to the content of this article.

\section*{Data availability}
Data sharing does not apply to this article as no new data were
created or analyzed in this study.


\begin{thebibliography}{99}
\bibitem{bm} J.~Bondy and U.~Murty, {\em Graph Theory with Applications}, North-Holland, NewYork, 1976.
\bibitem{DH} E.~De Negri and T.~Hibi, Gorenstein algebras of Veronese type, {\it J. Algebra} {\bf 193} (1997),
    629--639.
\bibitem{GW} S.~Goto and K. ~Watanabe, On graded rings, I {\em J. Math. Soc. Japan} {\bf 30} (1978), 179--213.
\bibitem{HHgtm260}
J. Herzog and T. Hibi, {\em Monomial Ideals}, GTM 260, Springer, 2011.
\bibitem{HSF}
T.~Hibi and S.~A.~ Seyed Fakhari, Bounded powers of edge ideals: regularity and linear quotients, {\em Proc. Amer. Math. Soc.}, to appear, arXiv:2502.01768.
\bibitem{kna} K.~Khashyarmanesh, M.~Nasernejad and A.~Asloob Qureshi, On the matroidal path ideals, {\em J. Algebra Appl.}, {\bf 22} (2023), \#2350227.
\bibitem{oh} H.~Ohsugi and T.~Hibi, Compressed polytopes, initial ideals and complete multipartite graphs, {\em Illinois J. Math.} {\bf 44} (2000), 391--406.
\bibitem{w} D.~J.~A. Welsh, {\it Matroid Theory}, Academic Press, London, New York, 1976.
\end{thebibliography}
\end{document}